\pgfplotsset{compat=1.11}
\definecolor{Maroon}{RGB}{128,0,0}
\definecolor{Blue}{RGB}{0,0,255}
\newcommand{\defin}[1]{\textcolor{Maroon}{\emph{#1}}\index{#1}}
\newcommand{\handlethispage}{}
\theoremstyle{plain}
\numberwithin{equation}{section}
\newtheorem{thm}{Theorem}
\newtheorem{lemma}[thm]{Lemma}
\newtheorem{prop}[thm]{Proposition}
\newtheorem{corollary}[thm]{Corollary}
\newtheorem{definition}{Definition}
\newtheorem{notation}{Notation}
\theoremstyle{definition}
\newtheorem{remark}{Remark}
\newtheorem{problem}{Problem}
\newcommand{\dd}{\mathsf{deg}}
\newenvironment{txteq*}
{
	\begin{equation*}
	\begin{minipage}[t]{0.85\textwidth} 
	\em                                
}
\title{Spectral Methods for Matrix Product Factorization}
\author[SA]{Saieed Akbari}
\author[YF]{Yi-Zheng Fan}
\author[YF]{Fu-Tao Hu}
\author[BM]{Babak Miraftab}
\author[YF]{Yi Wang}
\address[SA]{Department of Mathematics, Sharif University of Technology, Tehran, Iran}
\address[YF]{School of Mathematical Sciences (SMS), Anhui University, China}
\address[BM]{School of Computer Science, Carleton University, Ottawa, Canada}
\ead{s_akbari@sharif.edu}
\ead{fanyz@ahu.edu.cn}
\ead{hufu@ahu.edu.cn}
\ead{babak.miraftab@carleton.ca}
\begin{document}
\bibliographystyle{LAA}


\begin{abstract}
A graph $G$ is factored into graphs $H$ and $K$ via a matrix product if there exist adjacency matrices $A$, $B$, and $C$ of $G$, $H$, and $K$, respectively, such that $A = BC$. In this paper, we study the spectral aspects of the matrix product of graphs, including regularity, bipartiteness, and connectivity.
We show that if a graph $G$ is factored into a connected graph $H$ and a graph $K$ with no isolated vertices, then certain properties hold. 
If $H$ is non-bipartite, then $G$ is connected. If $H$ is bipartite and $G$ is not connected, then $K$ is a regular bipartite graph, and consequently, $n$ is even. 
Furthermore, we show that trees are not factorizable, which answers a question posed by Maghsoudi et al.
\end{abstract}

\begin{keyword}
Matrix product \sep eigenvalues \sep $(0,1)$-matrices
\MSC[2010]{ 05C50, 15A18 }
\end{keyword}

\maketitle

\section{Introduction}\label{sec:intro}
In this paper, we only deal with simple graphs, which are graphs without loops or multiedges.
A graph has a \defin{factorization} with respect to a given graph product if it can be represented as the product of two graphs.
For the Cartesian product, this means that a simple graph $G$ can be expressed as $H\Box K$.
The factorization problem investigates which graph classes admit a  factorization. This problem has been studied for various graph products such as the Cartesian product, Tensor Product, Strong product, etc. For a comprehensive survey, we refer readers to \cite{MR2817074}.
In this paper, we delve into the factorization problem with respect to the \defin{matrix product of graphs}. Prasad et al. introduced the concept of the matrix product of graphs in \cite{Manjunatha}.
A graph $G$ has a factorization into graphs $H$ and $K$ if there are adjacency matrices of $A, B$, and $C$ of $G,H$ and $K$, respectively such that $A=BC$.
For instance, the cycle graph on $6$ vertices i.e. $C_6$ has a  factorization into the following graphs:
\begin{figure}[H]
    \centering
    \includegraphics[scale=0.7]{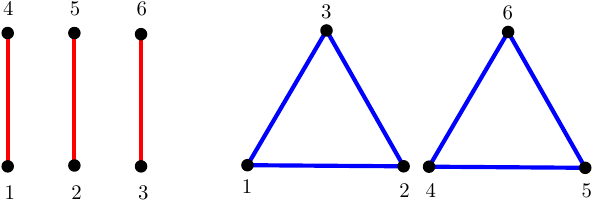}
    \caption{A matching of size 3 and union of two triangles}
    \label{fig:enter-label}
\end{figure}
\noindent Because one can see that
\begin{align*}
    \begin{bmatrix}
	0 & 1 & 1 & 0 & 0 & 0 \\
	1 & 0 & 1 & 0 & 0 & 0 \\
	1 & 1 & 0 & 0 & 0 & 0 \\
	0 & 0 & 0 & 0 & 1 & 1 \\
    0 & 0 & 0 & 1 & 0 & 1 \\
	0 & 0 & 0 & 1 & 1 & 0 \\
	\end{bmatrix} \begin{bmatrix}
	0 & 0 & 0 & 1 & 0 & 0\\
	0 & 0 & 0 & 0 & 1 & 0\\
	0 & 0 & 0 & 0 & 0 & 1\\
	1 & 0 & 0 & 0 & 0 & 0\\
    0 & 1 & 0 & 0 & 0 & 0\\
	0 & 0 & 1 & 0 & 0 & 0\\
	\end{bmatrix} = \begin{bmatrix}
	0 & 0 & 0 & 0 & 1 & 1\\
    0 & 0 & 0 & 1 & 0 & 1\\
    0 & 0 & 0 & 1 & 1 & 0\\
    0 & 1 & 1 & 0 & 0 & 0\\
    1 & 0 & 1 & 0 & 0 & 0\\
    1 & 1 & 0 & 0 & 0 & 0\\
	\end{bmatrix}.
 \end{align*}

Recently, the factorization problem with respect to the matrix product has garnered attention from researchers.
For example, Prasad et al. proved in \cite{Manjunatha} that if a graph $G$ has a factorization, then it must have an even number of edges, see \cite[Theorem 4]{Manjunatha}.
Additionally, in \cite{maghsoudi2023matrix}, Maghsoudi et al. characterized which complete graphs and complete bipartite graphs have factorizations, see \cite[Theorem 1.1 and Theorem 1.2]{maghsoudi2023matrix}.
In the same paper, the authors propose the question of whether trees are factorizable or not.
It is observed that any tree with an even number of vertices cannot be factorizable, given that it contains an odd number of edges.
In this paper, we show that every forest with no isolated vertices and with an odd number of connected components is not factorizable  and moreover we show that every tree of order at least 2 is not factorizable.

\noindent {\bf An overview of our paper:} Our paper is organized as follows. After presenting the preliminaries, we provide some general spectral results about factorization in \cref{gen}. We then continue in \cref{reg_sec} with results regarding the regularity of factorizable graphs. Following this, we discuss the bipartiteness of factorizable graphs. \Cref{conn_sec} is devoted to the connectivity of factorizable graphs. Finally, we close the paper with our main result in \cref{tree_sec}, which focuses on the factorization of jungles and trees. 


\section{Preliminaries}\label{sec:prem}
\noindent Throughout this paper, we consistently assume that $G$ denotes a simple graph, which implies that $G$ is free of loops and multiple edges.
Let $G$ be a graph. Then $V(G)$ denotes the vertex set of $G$, and $E(G)$ denotes its edge set.
A \defin{positive} matrix denoted by $A>0$, is a matrix in which all the entries are greater than zero.
The \defin{adjacency matrix} $A=[a_{ij}]$  of a graph $G $ of order $n$ is defined as follows:

\[
a_{ij} =
\begin{cases}
1, & \text{if there is an edge between vertex } i \text{ and vertex } j, \\
0, & \text{otherwise}.
\end{cases}
\]
The \defin{eigenvalues} of a graph are derived from its adjacency matrix. The eigenvalues provide important structural information about the graph.
\begin{definition}
We say a graph $G$ is \defin{factored} into graphs $H$ and $K$ if there exist adjacency matrices $A$, $B$, and $C$ of $G$, $H$, and $K$, respectively such that $A = BC$.
In this case, $H$ and $K$ are called \defin{factors} of $G$ and we say $G$ is \defin{factorizable}.
\end{definition}

\begin{remark}\label{diff_adj_matrix}
We note that if a graph $G$ can be factored into graphs $H$ and $K$, then any graph isomorphic to $G$ can also be factored into two graphs $H'$ and $K'$, where $H' \cong H$ and $K' \cong K$.
This is because the adjacency matrices $A$ and $B$ represent isomorphic graphs if and only if there exists a permutation matrix $P$ such that $B = P^TAP$. Since permutation matrices are orthogonal ($P^T = P^{-1}$), this implies that the matrices $A$ and $B$ are similar.
\end{remark}


\section{General Results on Matrix Products}\label{gen}

\begin{lemma}{\rm\cite[Section 6.5]{MR0276251}}\label{sim_diag}
Diagonalizable matrices are simultaneously diagonalizable if and only if they are commutative.
\end{lemma}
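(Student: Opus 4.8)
The plan is to prove both implications separately, treating the statement for a pair of matrices $A$ and $B$ (the general finite commuting family then follows by induction). The forward direction is immediate: if there is a single invertible $S$ with $S^{-1}AS = D_1$ and $S^{-1}BS = D_2$ both diagonal, then $AB = SD_1D_2S^{-1}$ and $BA = SD_2D_1S^{-1}$, and since diagonal matrices always commute we obtain $AB = BA$. So the real content lies in the converse.

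For the converse, I would assume $A$ and $B$ are diagonalizable with $AB = BA$, and build a common eigenbasis. First I would decompose the ambient space as a direct sum $V = \bigoplus_{\lambda} E_\lambda$ of the eigenspaces $E_\lambda = \ker(A - \lambda I)$ of $A$; this decomposition exists precisely because $A$ is diagonalizable. The crucial observation, and the only place commutativity is used, is that each $E_\lambda$ is $B$-invariant: if $Av = \lambda v$ then $A(Bv) = B(Av) = \lambda(Bv)$, so $Bv \in E_\lambda$.

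Next I would argue that the restriction $B|_{E_\lambda}$ is itself diagonalizable. The cleanest route is through minimal polynomials: a matrix is diagonalizable if and only if its minimal polynomial is a product of distinct linear factors, and the minimal polynomial of any restriction to an invariant subspace divides the minimal polynomial of $B$, hence also splits into distinct linear factors. Therefore $B|_{E_\lambda}$ is diagonalizable, and I can choose a basis of $E_\lambda$ consisting of eigenvectors of $B$. Since every vector of $E_\lambda$ is automatically an eigenvector of $A$, this basis consists of common eigenvectors. Taking the union of these bases over all $\lambda$ yields a basis of $V$ of simultaneous eigenvectors, and the matrix $S$ whose columns are these vectors simultaneously diagonalizes $A$ and $B$.

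The main obstacle I anticipate is exactly the diagonalizability of the restriction $B|_{E_\lambda}$: invariance alone does not force diagonalizability, so one genuinely needs the minimal-polynomial criterion (equivalently, the fact that a diagonalizable operator remains diagonalizable on any invariant subspace). Once that step is secured, the extension to an arbitrary finite commuting family of diagonalizable matrices follows by induction on the number of matrices: diagonalize one of them, restrict the remaining matrices to its eigenspaces (each such eigenspace is invariant under all of them, and they still pairwise commute), and apply the inductive hypothesis on each eigenspace.
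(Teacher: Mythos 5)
Your proof is correct, and there is nothing in the paper to compare it against line by line: the paper does not prove this lemma at all, but cites it to [Section 6.5]{MR0276251} (Hoffman--Kunze), whose argument is essentially the one you give. Your treatment of the one genuinely delicate step --- that the restriction of a diagonalizable operator to an invariant subspace is again diagonalizable, via the minimal-polynomial criterion --- is exactly right, and your induction on the number of matrices suffices for the paper's purposes, since the lemma is only ever applied to the three mutually commuting symmetric matrices $A$, $B$, $C$.
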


\noindent Let $G$ be factored into graphs $H$ and $K$.
Then if $\lambda$ is an eigenvalue of $G$, then there are eigenvalues $\mu$ and $\gamma$ of $H$ and $K$ respectively such that $\lambda=\mu\gamma$.
In the following theorem, we show that if a connected graph $G$ is factored into graphs $H$ and $K$, then the largest eigenvalue of $G$ is the product of the largest eigenvalues of $H$ and $K$.

\begin{thm}\label{eigen_max}
Let a connected graph $G$ be factored into graphs $H$ and $K$.
Then $$\lambda_{\text {max}}(G)=\lambda_{\text {max}}(H) \lambda_{\text {max}}(K).$$
\end{thm}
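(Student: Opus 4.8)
The plan is to exploit the symmetry of the three adjacency matrices together with Perron--Frobenius theory. The starting point is the observation that although a product of symmetric matrices need not be symmetric, here $A = BC$ \emph{is} symmetric, being an adjacency matrix. Hence $BC = (BC)^{\T} = C^{\T}B^{\T} = CB$, using that $B$ and $C$ are symmetric. Therefore $B$ and $C$ commute, and consequently $A$ commutes with both $B$ and $C$ (for instance $AB = BCB = B\,CB = B\,BC = B^2C = BA$). This commuting identity is the key enabling observation for everything that follows.

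Next I would bring in the connectivity of $G$. Since $G$ is connected, its adjacency matrix $A$ is irreducible, so by the Perron--Frobenius theorem $\lambda_{\max}(G)$ is a simple eigenvalue admitting a strictly positive eigenvector $x > 0$; in particular the corresponding eigenspace is one-dimensional. Because $B$ commutes with $A$, it maps this eigenspace into itself, and as the space is one-dimensional, $x$ must itself be an eigenvector of $B$, say $Bx = \mu x$. The same reasoning gives $Cx = \gamma x$. Feeding this back into $Ax = BCx$ yields $\lambda_{\max}(G)\,x = \mu\gamma\,x$, hence $\lambda_{\max}(G) = \mu\gamma$. Alternatively, \cref{sim_diag} lets one simultaneously diagonalize $B$ and $C$, which diagonalizes $A$ as well and pairs the spectra so that each eigenvalue of $A$ is a product $\mu_i\gamma_i$; the one-dimensionality of the top eigenspace then singles out the relevant pair.

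The final and most delicate step is to identify $\mu$ and $\gamma$ as the \emph{largest} eigenvalues of $H$ and $K$, even though $H$ and $K$ may be disconnected, so that Perron--Frobenius cannot be invoked for them directly. Here I would use the following standard fact about a nonnegative matrix $B$: if $Bx = \mu x$ for some strictly positive $x$, then $\mu$ equals the spectral radius $\rho(B)$. To see this, let $z \ge 0$, $z \neq 0$, be a Perron eigenvector of $B^{\T}$, so that $z^{\T}B = \rho(B)\,z^{\T}$; comparing $z^{\T}Bx = \rho(B)\,z^{\T}x$ with $z^{\T}Bx = \mu\,z^{\T}x$ and noting $z^{\T}x > 0$ forces $\mu = \rho(B)$. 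Since $B$ is symmetric and nonnegative, $\rho(B) = \lambda_{\max}(B) = \lambda_{\max}(H)$, and likewise $\gamma = \lambda_{\max}(K)$. Combining this with $\lambda_{\max}(G) = \mu\gamma$ completes the proof. I expect this last step---ruling out that the common positive eigenvector might belong to a non-maximal eigenvalue of a disconnected factor---to be the main point requiring care.
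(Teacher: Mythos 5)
Your proof is correct and follows essentially the same route as the paper: both derive the commutativity of $B$ and $C$ from the symmetry of $A=BC$, use the connectedness of $G$ (Perron--Frobenius for $A$) to obtain a common strictly positive eigenvector of $A$, $B$, and $C$, and then conclude via the fact that a nonnegative matrix admitting a positive eigenvector has that eigenvalue equal to its spectral radius. The only difference is one of detail: you explicitly justify the commutation and prove that last fact (via a left Perron eigenvector of $B^{\T}$), whereas the paper asserts both, stating simply that the matrices ``commute with each other'' and that ``since $u$ is an eigenvector for $B$ and $u>0$, we have $Bu=\lambda_{\max}(B)u$.''
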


\begin{proof}
Let $A$, $B$, and $C$ represent the adjacency matrices of graphs $G$, $H$, and $K$, respectively, such that $A=BC$.
It is clear that $A$, $B$, and $C$ not only commute with each other but are also symmetric matrices.
We conclude that they can be diagonalized simultaneously, as discussed in \Cref{sim_diag}.
From this simultaneous diagonalization, we infer that the matrices $A$, $B$, and $C$ share a common Perron vector $u$.
Since $u$ is an eigenvector for $B$ and $u>0$, we have $Bu=\lambda_{\text {max}}(B)u$.
Similarly $Cu=\lambda_{\text {max}}(C)u$.
Thus $\lambda_{\text {max}}(A)=\lambda_{\text {max}}(B)\lambda_{\text {max}}(C)$.
\end{proof}

\noindent \Cref{eigen_max} can be improved as follows:

\noindent Let a connected graph $G$ be factored into graphs $H$ and $K$.
If one of $G,H,K$ is connected, then
$$\lambda_{\text {max}}(G)=\lambda_{\text {max}}(H) \lambda_{\text {max}}(K),$$
each component of $G, H, K$ has the same spectral radius as $G,H,K$ respectively,
and hence none of $G,H,K$ contains isolated vertices.

The proof of the equality is similar by the fact that $A,B,C$ share a common Perron vector $u >0$.
If one of $G, H, K$ has a component, say $G_0$ of $G$, then
$$ A(G_0) u|_{V(G_0)} = \lambda_{\text {max}}(G)u|_{V(G_0)},$$
where $u|_{V(G_0)}$ is a subvector of $u$ indexed by $V(G_0)$, which is positive.
So $\lambda_{\text {max}}(G)=\lambda_{\text {max}}(G_0)$;
and $G_0$ is not an isolated vertices, otherwise $\lambda_{\text {max}}(G_0)=0$; a contradiction.

\begin{remark}
If $G$ is not connected, then \Cref{eigen_max} does not hold by the following example.
The matrix product of graphs in \Cref{fig:enter-label} can be generalized to the following product,
namely,
$$ C_{2n}= (n K_2) * (2 C_n),$$
where $n K_2$ denotes an $n$-matching consisting of $n$ disjoint edges $\{1,n+1\}, \{2,n+2\}, \ldots, \{n,2n\}$, and $2 C_n$ denotes the union of two disjoint cycles $C_n$, which have edges $\{1,2\}, \{2,3\},\ldots,\{n-1,n\}, \{n,1\}$,  and $\{n+1,n+2\}, \ldots, \{2n-1,2n\}, \{2n,n+1\}$ respectively.

Let $H= (n K_2) \uplus (2 C_n)$ and $K=(2C_n) \uplus (nK_2)$, where $\uplus$ denotes the disjoint union of two graphs.
We have
$$ (2 C_{2n})= H * K,$$
but
$$2=\lambda_{\text {max}}(2 C_{2n}) \ne \lambda_{\text {max}}(H) \times \lambda_{\text {max}}(K)=2 \times 2=4.$$
\end{remark}

\noindent Let $G$ be a graph, $u,v\in V(G)$. Then, we say that $u$ and $v$ are \defin{connected} in $G$ if there exists a path that starts at $u$ and ends at $v$.

\begin{lemma}{\rm\cite[Corollary 2]{Manjunatha}}\label{deg}
Let $G$ be factored into graphs $H$ and $K$.
If vertices $u$ and $v$ are connected in $H$, then $\dd_K(u)={\dd}_K(v)$.
\end{lemma}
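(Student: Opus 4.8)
The plan is to reduce the statement to a single edge of $H$ and then propagate the equality along a path. Indeed, if $u$ and $v$ are connected in $H$ there is a path $u = w_0, w_1, \dots, w_m = v$ with each $w_iw_{i+1}\in E(H)$, so it suffices to prove $\dd_K(x)=\dd_K(y)$ whenever $x$ and $y$ are adjacent in $H$; the general case then follows by transitivity along this path.

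The main tool will be the identity $A = BC = CB$. Since $A$, $B$, $C$ are symmetric and $A = BC$, we have $A = A^{\T} = C^{\T}B^{\T} = CB$, so $B$ and $C$ commute. Reading off entries and using that $B,C$ are symmetric $(0,1)$-matrices, I would record the two interpretations
\[
(BC)_{xy} = |N_H(x)\cap N_K(y)|, \qquad (CB)_{xy} = |N_K(x)\cap N_H(y)|,
\]
where $N_H, N_K, N_G$ denote neighbourhoods in $H, K, G$. Because $A$ is a $(0,1)$-matrix, both quantities lie in $\{0,1\}$ for every pair $x,y$; these bounds $|N_H(\cdot)\cap N_K(\cdot)|\le 1$ are the engine of the whole argument.

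Now fix an edge $uv\in E(H)$ and build an injection $\phi\colon N_K(u)\to N_K(v)$. For $w\in N_K(u)$, the relations $u\in N_H(v)$ (from $uv\in E(H)$) and $u\in N_K(w)$ (from $u\sim_K w$) give a witness in $N_H(v)\cap N_K(w)$, forcing $A_{vw}=1$; hence $|N_K(v)\cap N_H(w)| = (CB)_{vw} = 1$, and I would let $\phi(w)$ be the unique vertex $k\in N_K(v)\cap N_H(w)$, which lies in the codomain $N_K(v)$. This $\phi$ is injective: if $\phi(w_1)=\phi(w_2)=k$, then $w_1,w_2\in N_H(k)\cap N_K(u)$, and since $|N_H(k)\cap N_K(u)| = (BC)_{ku} = A_{ku}\le 1$ we conclude $w_1=w_2$. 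Thus $\dd_K(u)=|N_K(u)|\le |N_K(v)|=\dd_K(v)$, and exchanging the roles of $u$ and $v$ yields the reverse inequality, so $\dd_K(u)=\dd_K(v)$.

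The only real obstacle is establishing this single-edge equality; once the intersection bounds are in hand, the two-sided counting injection makes it routine, and propagation along a path of $H$ finishes the proof. As a consistency check one should confirm the diagonal reading $A_{xx}=|N_H(x)\cap N_K(x)|=0$, i.e.\ that each vertex has disjoint $H$- and $K$-neighbourhoods, but this does not affect the argument above.
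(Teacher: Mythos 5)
The paper does not prove this lemma at all---it is imported verbatim as \cite[Corollary 2]{Manjunatha}---so there is no in-paper argument to compare yours against; judged on its own terms, your proof is correct and self-contained. The reduction to a single $H$-edge followed by propagation along a path is sound; the readings $(BC)_{xy}=|N_H(x)\cap N_K(y)|$ and $(CB)_{xy}=|N_K(x)\cap N_H(y)|$ are the right interpretation of the hypothesis; and the combination of $BC=CB$ (forced by the symmetry of $A$, $B$, $C$) with the fact that $A$ is a $(0,1)$-matrix does deliver both the well-definedness of $\phi$ (existence and uniqueness of the vertex $k\in N_K(v)\cap N_H(w)$) and its injectivity, giving $\dd_K(u)\le \dd_K(v)$ and equality by exchanging $u$ and $v$. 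This fills in, at the level of elementary neighbourhood counting, a proof the authors chose to leave to the cited literature.

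One remark on presentation: the diagonal observation you relegate to a final ``consistency check'' is actually load-bearing, not optional. For $w\in N_K(u)$ your argument concludes $A_{vw}=1$; if $w=v$ this would read $A_{vv}=1$, which is false for a simple graph. What rules this out is precisely the identity $A_{vv}=(BC)_{vv}=|N_H(v)\cap N_K(v)|=0$: since $uv\in E(H)$ puts $u\in N_H(v)$, it forces $u\notin N_K(v)$, i.e.\ $v\notin N_K(u)$, so the degenerate case $w=v$ never arises. State this before constructing $\phi$ rather than after; with that rearrangement the argument is airtight.
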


\begin{lemma}{\rm\cite[Theorem 7]{Manjunatha}}\label{degree}
Let $G$ be factored into graphs $H$ and $K$.
Then, the following holds:
\begin{align*}
\dd_{G}(v) = \dd_{H}(v)\dd_{K}(v).
\end{align*}
\end{lemma}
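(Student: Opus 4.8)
The plan is to compute $\dd_G(v)$ directly from the defining matrix equation $A = BC$, where $A$, $B$, $C$ are the adjacency matrices of $G$, $H$, $K$ respectively. Since $A$ is a $(0,1)$-matrix whose row sums record the vertex degrees of $G$, I would first write $\dd_G(v) = \sum_{j} A_{vj}$ and then substitute $A = BC$ to get $\dd_G(v) = \sum_j (BC)_{vj} = \sum_j \sum_k B_{vk} C_{kj}$. No symmetry of $A$ is needed for this; only the fact that the row sums of an adjacency matrix are the degrees.

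The next step is to interchange the order of summation and factor out the terms depending only on $k$, which gives $\dd_G(v) = \sum_k B_{vk}\big(\sum_j C_{kj}\big) = \sum_k B_{vk}\,\dd_K(k)$, since the inner sum $\sum_j C_{kj}$ is precisely the degree of $k$ in $K$. Because $H$ is simple and hence has no loops, $B_{vv}=0$, so the indices $k$ that actually contribute to this sum are exactly the neighbors of $v$ in $H$. The key observation is that each such neighbor $k$ is joined to $v$ by an edge and is therefore connected to $v$ in $H$, so \Cref{deg} applies and yields $\dd_K(k) = \dd_K(v)$ for every neighbor $k$ of $v$.

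Substituting this common value back into the sum, $\dd_K(v)$ becomes a constant factor that pulls out, leaving $\dd_G(v) = \dd_K(v)\sum_k B_{vk} = \dd_K(v)\,\dd_H(v)$, since $\sum_k B_{vk} = \dd_H(v)$. This is exactly the claimed identity. I do not expect any serious obstacle here; the only point requiring care is to recognize that \Cref{deg} can be invoked uniformly across all neighbors of $v$ to replace each $\dd_K(k)$ by the single value $\dd_K(v)$, and to note that the diagonal term drops out precisely because the graphs involved are simple. Everything else is a routine reordering of the double sum.
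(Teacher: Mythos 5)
Your argument is correct, but be aware that the paper offers no proof to measure it against: this lemma is imported verbatim as \cite[Theorem 7]{Manjunatha}, just as \Cref{deg} is imported as \cite[Corollary 2]{Manjunatha}. What you have actually produced is a derivation of one cited result from the other: writing $\dd_G(v)$ as the $v$-th row sum of $A=BC$, swapping the order of summation to get
\[
\dd_G(v)=\sum_{k} B_{vk}\,\dd_K(k)=\sum_{k\in N_H(v)}\dd_K(k),
\]
and then applying \Cref{deg} (an $H$-neighbour of $v$ is in particular connected to $v$ in $H$) to replace each summand by the common value $\dd_K(v)$. Every step is sound, and the degenerate case $\dd_H(v)=0$ causes no trouble, since both sides of the identity then vanish. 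Two minor observations. First, the remark about $B_{vv}=0$ carries no real weight: the indices with $B_{vk}=1$ are the $H$-neighbours of $v$ by definition of the adjacency matrix, and even a diagonal term would be harmless because \Cref{deg} applied to $k=v$ is a tautology. Second, your proof is relative rather than self-contained: all of the graph-theoretic substance --- that vertices joined by an edge of $H$ have equal degree in $K$ --- sits inside the cited \Cref{deg}, which neither you nor this paper proves. Within the paper's logical structure this is unobjectionable (\Cref{deg} is stated before \Cref{degree}, and in the source the corollary precedes the theorem, so no circularity threatens), and your reduction is presumably exactly how the original reference passes from its Corollary 2 to its Theorem 7.
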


\begin{lemma}{\rm \cite[Corollary 1.2]{ilic}}\label{ilic}
Let $G$ be a connected graph with the degree sequence $(d_1, \ldots, d_n)$. Then
$$\sum_{i=1}^n d_i^2\leq \frac{(n+1)^2|E(G)|^2}{2n(n-1)}$$
\end{lemma}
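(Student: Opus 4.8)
The plan is to treat the inequality as a constrained optimization problem and to exploit the two pieces of structure that connectivity supplies: every vertex satisfies $1 \le d_i \le n-1$ (the lower bound is exactly where connectivity enters, since a connected graph on $n \ge 2$ vertices has no isolated vertex), and the degrees obey the handshake identity $\sum_{i=1}^n d_i = 2|E(G)|$. Writing $m = |E(G)|$ and fixing it, I would regard $\sum_{i=1}^n d_i^2$ as a function of the vector $(d_1,\dots,d_n)$ ranging over the polytope $P = \{x \in \mathbb{R}^n : \sum_i x_i = 2m,\ 1 \le x_i \le n-1\}$ and bound its maximum; any bound valid on all of $P$ is a fortiori valid for genuine degree sequences.

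Since $x \mapsto x^2$ is convex, $\sum_i x_i^2$ is convex on $P$, so its maximum is attained at a vertex of $P$. At such a vertex all coordinates but at most one sit at an endpoint, $1$ or $n-1$; thus the extremal configuration consists of $k$ coordinates equal to $n-1$, one ``slack'' coordinate, and the rest equal to $1$, with $k$ pinned down by the linear constraint $\sum_i x_i = 2m$. The first concrete step is therefore to write $\sum_i x_i^2$ explicitly for this one-parameter family, substitute $k = k(m)$ from the constraint, and so reduce the whole question to a single expression in $m$ and $n$.

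The second step is the algebra: one eliminates $k$, obtains a quadratic in $m$, and checks that it is dominated by $\tfrac{(n+1)^2 m^2}{2n(n-1)}$. A candidate clean route to the stated coefficient is the reverse Cauchy--Schwarz (Kantorovich / P\'olya--Szeg\H{o}) inequality $n\sum_i d_i^2 \le \tfrac{(m_0+M_0)^2}{4 m_0 M_0}\,(\sum_i d_i)^2$, applied with suitable extreme ratios $m_0,M_0$ read off from the degree bounds, after which one would reconcile the two computations to isolate the factor $\tfrac{(n+1)^2}{2n(n-1)}$. I expect the main obstacle to be precisely this reconciliation: the relaxation to $P$ ignores \emph{realizability} (by the Erd\H{o}s--Gallai conditions a vector with several coordinates equal to $n-1$ forces the remaining degrees upward, so not every vertex of $P$ is a graphical sequence), and matching the exact constant hinges on testing the bound against the genuinely extremal connected graph rather than against an infeasible relaxed optimum. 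Getting the extremal configuration and the constant to agree, rather than any individual inequality, is where the real care will be needed.
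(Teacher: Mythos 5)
Your plan cannot be completed, because the statement you are asked to prove is false as written, and your own method would expose this rather than confirm it. Take the star $K_{1,n-1}$ (or, smallest of all, the path $P_3$): it is connected, has $m=n-1$ edges and degree sequence $(n-1,1,\dots,1)$, so $\sum_i d_i^2=(n-1)^2+(n-1)=n(n-1)$, while the claimed bound equals $\frac{(n+1)^2(n-1)^2}{2n(n-1)}=\frac{(n+1)^2(n-1)}{2n}$. The inequality $n(n-1)\le\frac{(n+1)^2(n-1)}{2n}$ reduces to $2n^2\le(n+1)^2$, which fails for every $n\ge 3$; concretely, for $P_3$ one gets $\sum_i d_i^2=6>\frac{16}{3}$. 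Note that this violating point is exactly one of the vertices of your polytope $P$ (one coordinate at $n-1$, the rest at $1$), and it is graphical and connected, so the obstacle you anticipated --- that the relaxed optimum might fail Erd\H{o}s--Gallai realizability --- is not where the argument breaks: the ``algebra step'' in which you check domination by $\frac{(n+1)^2m^2}{2n(n-1)}$ simply cannot go through, for the relaxation or for genuine connected graphs. (For comparison, de Caen's bound $\sum_i d_i^2\le m\left(\frac{2m}{n-1}+n-2\right)$ is attained by stars, and at $m=n-1$ the lemma's bound is strictly smaller than de Caen's value, which by itself shows no proof can exist.)

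You should also know that the paper offers no proof to compare against: the lemma is quoted verbatim from \cite[Corollary 1.2]{ilic}, so its correctness rests entirely on that source. Either the original result carries additional hypotheses --- numerically the bound does appear to hold, and to be nearly sharp, when the minimum degree is at least $2$ (e.g.\ for degree sequences $(n-1,n-1,2,\dots,2)$ it holds with near equality) --- or the statement was transcribed incorrectly. So the productive next step is not to refine your optimization but to recover the precise statement from the cited source; as it stands, both your proposal and the paper's own use of this lemma (it is invoked in Section~\ref{gen} for factors $H$ and $K$ that are not even assumed connected) rest on an inequality that is false in the stated generality.
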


\begin{prop}
Let a graph $G$ be factored into  $H$ and $K$ which both are without isolated vertices.
Then the following holds:

\begin{enumerate}
\item $|E(H)|$ or $|E(K)|$ is not greater than $|E(G)|$.
\item $|E(G)|\leq \min\{\Delta(H)|E(K)|,\Delta(K)|E(H)|\}$.
\item If $|V(G)|>4$, then $|E(G)|\leq \frac{1}{2}|E(H)||E(K)|$.
\end{enumerate}
\end{prop}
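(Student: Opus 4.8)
The plan is to reduce everything to the degree identity of \Cref{degree}. Writing $n=|V(G)|=|V(H)|=|V(K)|$ and summing over vertices gives $2|E(G)|=\sum_v\dd_G(v)=\sum_v\dd_H(v)\dd_K(v)$, while the hypothesis that $H$ and $K$ have no isolated vertices forces $\dd_H(v)\ge 1$ and $\dd_K(v)\ge 1$ for every $v$. For part (1) I would use $\dd_K(v)\ge 1$ to get $2|E(G)|=\sum_v\dd_H(v)\dd_K(v)\ge\sum_v\dd_H(v)=2|E(H)|$, and symmetrically $2|E(G)|\ge 2|E(K)|$; this in fact proves the stronger statement that \emph{both} $|E(H)|$ and $|E(K)|$ are at most $|E(G)|$, which immediately yields the ``or'' in (1). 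For part (2) I would instead bound $\dd_H(v)\le\Delta(H)$, so that $2|E(G)|=\sum_v\dd_H(v)\dd_K(v)\le\Delta(H)\sum_v\dd_K(v)=2\Delta(H)|E(K)|$, and by the symmetric estimate $2|E(G)|\le 2\Delta(K)|E(H)|$, giving $|E(G)|\le\min\{\Delta(H)|E(K)|,\Delta(K)|E(H)|\}$.

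Part (3) is the substantive one. Starting again from $2|E(G)|=\sum_v\dd_H(v)\dd_K(v)$, I would apply the Cauchy--Schwarz inequality to obtain $\sum_v\dd_H(v)\dd_K(v)\le\bigl(\sum_v\dd_H(v)^2\bigr)^{1/2}\bigl(\sum_v\dd_K(v)^2\bigr)^{1/2}$, and then control each sum of squared degrees through \Cref{ilic}, which bounds $\sum_v\dd_H(v)^2\le\frac{(n+1)^2|E(H)|^2}{2n(n-1)}$ and likewise for $K$. Substituting and taking the square root produces $2|E(G)|\le\frac{(n+1)^2}{2n(n-1)}\,|E(H)||E(K)|$, so it remains only to check that the coefficient $\frac{(n+1)^2}{2n(n-1)}$ does not exceed $1$. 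This is the quadratic inequality $(n+1)^2\le 2n(n-1)$, i.e.\ $n^2-4n-1\ge 0$, which holds precisely when $n\ge 2+\sqrt5$, that is for $n\ge 5$. This is exactly the hypothesis $|V(G)|>4$, and it delivers $2|E(G)|\le|E(H)||E(K)|$, equivalently $|E(G)|\le\tfrac12|E(H)||E(K)|$.

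The main obstacle I anticipate is the applicability of \Cref{ilic}, which is stated for connected graphs, whereas here $H$ and $K$ are only assumed free of isolated vertices. Before combining the two factors through Cauchy--Schwarz I would need to confirm that the sum-of-squares bound survives in the disconnected case---either by verifying that the cited estimate extends to all graphs without isolated vertices, or by checking that disconnection does not make the extremal configuration for $\sum_v\dd(v)^2$ worse. Once that degree-squared inequality is secured, the arithmetic isolating the threshold $n=4$ is routine, so the genuine crux is the reduction to the sum-of-squares estimate and the justification of its validity under the stated hypotheses.
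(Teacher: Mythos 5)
Your proposal follows the paper's argument essentially verbatim: the same reduction to the degree identity of \Cref{degree} summed over all vertices for parts (1) and (2), and the same Cauchy--Schwarz step combined with \Cref{ilic} and the threshold inequality $(n+1)^2\le 2n(n-1)$ for $n\ge 5$ in part (3). The one concern you flag---that \Cref{ilic} is stated only for connected graphs while $H$ and $K$ are merely assumed free of isolated vertices---is a genuine issue, but the paper's own proof has exactly the same unaddressed gap (it applies the lemma to $H$ and $K$ with no connectivity check), so your write-up matches the published argument and is, if anything, more careful in explicitly identifying this point as the remaining thing to justify.
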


\begin{proof}
It follows from \Cref{degree} that $d_i=d_i'd_i''$, where $d_i=\dd_G(v_i)$, $d_i'=\dd_H(v_i)$ and $d_i''=\dd_K(v_i)$.
For the first item, we have the following calculation:
$$
2|E(G)|= \sum_{i=1}^n d_i =\sum_{i=1}^n d_i^{\prime} d_i^{\prime \prime} \geq \sum_{i=1}^n d_i^{\prime \prime}=2|E(H)|
$$
For the second item, one can see that $$2|E(G)|=\sum_{i=1}^n d_i= \sum_{i=1}^n d_i'd_i''\leq \sum_{i=1}^n \Delta(H)d_i''=2\Delta(H)|E(K)|.$$
Analogously one can show that $|E(G)|\leq \Delta(K)|E(H)|$.\\
Next, we apply the Cauchy-Schwarz inequality to \(2|E(G)| = \sum_{i=1}^{n} d_i'd_i''\), and we obtain \(2|E(G)| \leq \sqrt{\sum_{i=1}^{n} d_i'^2}\sqrt{\sum_{i=1}^{n} d_i''^2}\).
Now, it follows from \Cref{ilic} that
\[
\sum_{i=1}^n d_i'^2 \leq \frac{(n+1)^2|E(H)|^2}{2n(n-1)}\text{ and } \sum_{i=1}^n d_i''^2 \leq \frac{(n+1)^2|E(K)|^2}{2n(n-1)}.
\]
One can easily see that if \(n \geq 5\), then \((n+1)^2 \leq 2n(n-1)\) which implies that $2|E(G)|\leq |E(H)||E(K)|$, as desired.
\qedhere
\end{proof}


\section{Regularity of Matrix Products}\label{reg_sec}

\noindent Considering \Cref{degree} implies that if both $H$ and $K$ are regular graphs, then $G$ is regular too.

\begin{corollary}\label{cor->deg}
Let $G$ be factored into regular graphs $H$ and $K$.
Then $G $ is regular as well.
\end{corollary}

\noindent Next, we establish that the converse of \Cref{cor->deg} holds true as well under the condition of ``connectedness''. However, before delving into the proof, we require the following lemma and notation, proved by Hoffman.
\begin{notation}
We denote a matrix where every entry is equal to one by $J$, and $j$ denotes a vector with entries one.
\end{notation}

\begin{lemma}{\rm\cite[Theorem 2]{hoffman_63}}\label{hoff}
Let $G$ be a graph with the adjacency matrix $A$.
There exists a polynomial $p(x)$ such that $J=p(A)$
if and only if $G$ is regular and connected.
\end{lemma}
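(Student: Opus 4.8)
The plan is to prove the two implications separately, handling the easier forward direction by a commutativity argument and then constructing the polynomial explicitly in the reverse direction via the spectral decomposition of $A$.

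For the forward direction, suppose $J = p(A)$ for some polynomial $p$. The first observation is that any polynomial in $A$ commutes with $A$, so $AJ = JA$. Computing entrywise, the $(i,j)$ entry of $AJ$ equals $\dd_G(v_i)$ while the $(i,j)$ entry of $JA$ equals $\dd_G(v_j)$; equality for all $i,j$ forces all vertex degrees to coincide, so $G$ is regular. For connectedness I would argue by contrapositive: if $G$ is disconnected, then after relabelling the vertices $A$ is block diagonal, hence so is $p(A)$ for every polynomial $p$, and a block-diagonal matrix cannot equal $J$, all of whose off-diagonal blocks are nonzero.

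For the reverse direction, suppose $G$ is connected and $r$-regular. Then $j$ is an eigenvector of $A$ with eigenvalue $r$, and since $G$ is connected the Perron--Frobenius theorem guarantees that $r = \lambda_{\max}(G)$ is a \emph{simple} eigenvalue whose eigenspace is spanned by $j$. Because $A$ is real symmetric it admits a spectral decomposition $A = \sum_{i=1}^{s} \lambda_i E_i$ over its distinct eigenvalues $\lambda_1 = r > \lambda_2 > \cdots > \lambda_s$, where the $E_i$ are the orthogonal projections onto the corresponding eigenspaces. Simplicity of $r$ gives $E_1 = \tfrac{1}{n}\, j j^{\T} = \tfrac{1}{n} J$, so that $J = n E_1$.

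The final step is to realize $n E_1$ as a polynomial in $A$. Since $p(A) = \sum_{i=1}^{s} p(\lambda_i) E_i$ for any polynomial $p$, it suffices to choose $p$ with $p(\lambda_1) = n$ and $p(\lambda_i) = 0$ for every $i \geq 2$; the Lagrange interpolation polynomial $p(x) = n \prod_{i=2}^{s} \frac{x - \lambda_i}{\lambda_1 - \lambda_i}$ satisfies exactly these constraints, giving $p(A) = n E_1 = J$. I expect the main obstacle to be the reverse direction, and specifically the point where connectedness is used: one must know that the eigenspace of $r$ is exactly one-dimensional, so that $E_1$ is precisely $\tfrac{1}{n} J$. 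This is where Perron--Frobenius is essential, since without simplicity the interpolation target $n E_1$ would no longer equal $J$ and the construction would fail.
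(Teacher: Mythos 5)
The paper offers no proof of this lemma at all --- it is imported verbatim as a citation to Theorem 2 of Hoffman (1963) --- so there is no internal argument to compare against. Your proof is correct, self-contained, and is essentially the classical argument behind that citation: regularity is forced by $AJ = JA$ (row sums versus column sums), connectedness follows from the block-diagonal obstruction, and in the converse direction you build the ``Hoffman polynomial'' $p(x) = n\prod_{i\ge 2}(x-\lambda_i)/(\lambda_1-\lambda_i)$ by Lagrange interpolation through the distinct eigenvalues, with Perron--Frobenius supplying exactly the needed fact that $r=\lambda_{\text{max}}(G)$ is simple so that $E_1 = \tfrac{1}{n}J$. Two minor points you handle implicitly but correctly: the degenerate case $n=1$ (where $A=0$ and the empty product gives $p(x)=1$, so $p(A)=J$ still holds), and the fact that conjugating by a permutation matrix fixes $J$, which is what legitimizes the ``after relabelling'' step in the disconnected case. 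In effect, your proposal supplies a proof for what the paper treats as a black box.
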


\noindent We note that the vector $j$ is an eigenvector for the eigenvalue $k$ in a $k$-regular graph.

\begin{thm}\label{reg}
Let a connected regular graph $G$ be factored into graphs $H$ and $K$.
Then both $H$ and $K$ are regular.
\end{thm}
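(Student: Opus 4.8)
The plan is to exploit Hoffman's characterization (\Cref{hoff}) together with the commuting structure that symmetry forces on the factorization. Write $A$, $B$, $C$ for adjacency matrices of $G$, $H$, $K$ with $A = BC$. Since all three are symmetric, transposing the relation gives $A = A^{\T} = (BC)^{\T} = C^{\T}B^{\T} = CB$, so $B$ and $C$ commute and $A = BC = CB$. A one-line computation then shows that $A$ commutes with each factor: using $CB = BC$ we get $AB = (BC)B = B(CB) = B(BC) = B^2C$, while $BA = B(BC) = B^2C$, hence $AB = BA$, and symmetrically $AC = CA$. Because $G$ is connected and regular, \Cref{hoff} supplies a polynomial $p$ with $J = p(A)$. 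Since $B$ commutes with $A$ it commutes with every polynomial in $A$, so in particular $BJ = JB$, and likewise $CJ = JC$.

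Next I would convert commutation with $J$ into regularity. Let $n = |V(G)|$; the all-ones vector $j$ satisfies $Jj = nj$, and the $n$-eigenspace of $J$ is exactly the line spanned by $j$ (the solutions of $Jx = nx$ are precisely the multiples of $j$). From $BJ = JB$ I compute
$$J(Bj) = (JB)j = (BJ)j = B(Jj) = n\,Bj,$$
so $Bj$ lies in the $n$-eigenspace of $J$, forcing $Bj = c\,j$ for some scalar $c$. Thus every row sum of $B$ equals $c$, i.e. $H$ is $c$-regular. Running the identical argument with $C$ in place of $B$ shows $K$ is regular as well, which completes the proof.

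The main point to get right is the commuting structure: symmetry of the three adjacency matrices upgrades the single identity $A = BC$ into full pairwise commutativity, and this is exactly what lets $B$ and $C$ inherit commutation with $J$. Once $J = p(A)$ is in hand, the regularity of the factors drops out immediately from the one-dimensionality of the Perron eigenspace of $J$, so I do not expect any serious obstacle beyond bookkeeping. I would stress that connectedness of $G$ is used precisely to invoke \Cref{hoff}; without it $J$ need not be a polynomial in $A$, and indeed the analogous conclusion can fail for disconnected $G$.
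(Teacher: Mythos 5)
Your proof is correct and takes essentially the same approach as the paper: both invoke Hoffman's theorem (\Cref{hoff}) to write $J=p(A)$ and then deduce regularity of each factor from its commuting with $J$. The only differences are cosmetic --- you obtain $BJ=JB$ by first noting $AB=BA$ (so $B$ commutes with every polynomial in $A$), whereas the paper manipulates the expansion $J=\sum_i a_i B^iC^i$ term by term, and you spell out the final step from $BJ=JB$ to $Bj=cj$ that the paper leaves implicit.
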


\begin{proof}
Let $A$, $B$, and $C$ denote the adjacency matrices of $G $, $H$, and $K$, respectively.
Then we have $A=BC$.
Since $G$ is regular, by \Cref{hoff}, there exists a polynomial $p(x) = \sum_{i=0}^{k} a_i x^i$ such that $J = p(A)$, implying that $J = \sum_{i=1}^{k} a_i A^i = \sum_{i=1}^{k} a_i (BC)^i = \sum_{i=1}^{k} a_i B^i C^i$. Then, we proceed with the following calculations:
\begin{align*}
    BJ&=\sum_{i=1}^k a_iB^{i+1}C^i\\
    &=\sum_{i=1}^k a_iB^iC^iB\\
    &=JB
\end{align*}
Since $BJ = JB$, one can see that $H$ is regular. Similarly, one can demonstrate that $K$ is regular.
\end{proof}

\begin{remark}
\Cref{reg} does not hold for non-connected graphs.
Consider $H$ and $K$ with the following labels:
\begin{figure}[H]
    \centering
    \includegraphics[scale=0.8]{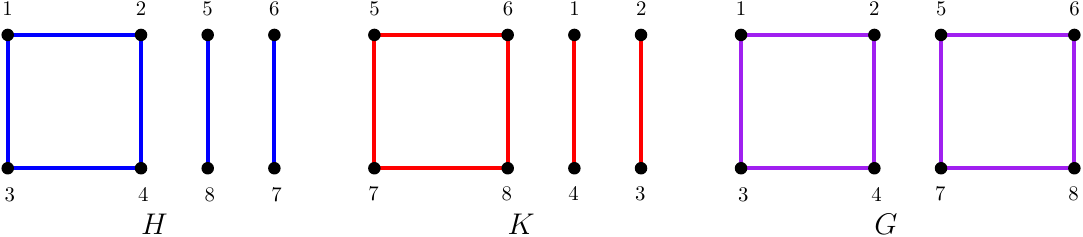}%
    \caption{The matrix product of $H$ and $K$ results the graph $G$.}%
    \label{factor_G}
\end{figure}
\noindent A simple calculation of the adjacency matrices verifies that:
\begin{align*}
    \begin{bmatrix}
	0 & 1 & 1 & 0 & 0 & 0 & 0 & 0 \\
	1 & 0 & 0 & 1 & 0 & 0 & 0 & 0 \\
 	1 & 0 & 0 & 1 & 0 & 0 & 0 & 0 \\
	0 & 1 & 1 & 0 & 0 & 0 & 0 & 0 \\
	0 & 0 & 0 & 0 & 0 & 0 & 0 & 1 \\
    0 & 0 & 0 & 0 & 0 & 0 & 1 & 0 \\
	0 & 0 & 0 & 0 & 0 & 1 & 0 & 0 \\
 	0 & 0 & 0 & 0 & 1 & 0 & 0 & 0 \\
	\end{bmatrix}
    \begin{bmatrix}
	0 & 0 & 0 & 1 & 0 & 0 & 0 & 0 \\
	0 & 0 & 1 & 0 & 0 & 0 & 0 & 0 \\
 	0 & 1 & 0 & 0 & 0 & 0 & 0 & 0 \\
	1 & 0 & 0 & 0 & 0 & 0 & 0 & 0 \\
	0 & 0 & 0 & 0 & 0 & 1 & 1 & 0 \\
    0 & 0 & 0 & 0 & 1 & 0 & 0 & 1 \\
	0 & 0 & 0 & 0 & 1 & 0 & 0 & 1 \\
 	0 & 0 & 0 & 0 & 0 & 1 & 1 & 0 \\
	\end{bmatrix}
 = \begin{bmatrix}
	0 & 1 & 1 & 0 & 0 & 0 & 0 & 0 \\
	1 & 0 & 0 & 1 & 0 & 0 & 0 & 0 \\
 	1 & 0 & 0 & 1 & 0 & 0 & 0 & 0 \\
	0 & 1 & 1 & 0 & 0 & 0 & 0 & 0 \\
	0 & 0 & 0 & 0 & 0 & 1 & 1 & 0 \\
    0 & 0 & 0 & 0 & 1 & 0 & 0 & 1 \\
	0 & 0 & 0 & 0 & 1 & 0 & 0 & 1 \\
 	0 & 0 & 0 & 0 & 0 & 1 & 1 & 0 \\
	\end{bmatrix},
 \end{align*}
\end{remark}


\section{Bipartiteness  of Matrix Products}\label{bipar_sec}

\noindent In this section, we study the properties of bipartite factorizable graphs. First, we review some standard spectral facts about bipartite graphs.

\begin{lemma}{\rm\cite{MR3185588}}\label{bipartite}
If $\lambda_1\geq \cdots\geq \lambda_n$ are the eigenvalues of a graph $G$, then the following statements hold:
\begin{enumerate}
    \item $|\lambda_i|\leq \lambda_1$.
    \item If $\lambda_1=\lambda_2$, then $G$ is not connected.
    \item If $\lambda_1=-\lambda_n$, then one of  components of $G$ is bipartite
    \item $\lambda_i=-\lambda_{n+1-i}$ for $i=1,\ldots,n$ if and only if $G$ is biparitite.
\end{enumerate}
\end{lemma}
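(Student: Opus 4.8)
The plan is to derive all four statements from Perron--Frobenius theory applied to the adjacency matrix $A$, which is a symmetric nonnegative $(0,1)$-matrix, together with two elementary observations: first, that the spectrum of $G$ is the multiset union of the spectra of its connected components, since $A$ is block-diagonal after relabeling; and second, that $\Tr(A^m)$ counts the closed walks of length $m$ in $G$, so that $\sum_i \lambda_i^m = \Tr(A^m) \ge 0$ for every $m$.

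For item (1), since $A$ is real symmetric its eigenvalues are real, and since $A$ is entrywise nonnegative the Perron--Frobenius theorem guarantees that the spectral radius $\rho(A) = \max_i |\lambda_i|$ is itself an eigenvalue; being the nonnegative eigenvalue of largest modulus it must equal $\lambda_1$, whence $|\lambda_i| \le \lambda_1$ for all $i$. For item (2) I would argue contrapositively: if $G$ is connected then $A$ is irreducible, and the Perron--Frobenius theorem for irreducible nonnegative matrices asserts that $\rho(A)$ is a simple eigenvalue; thus $\lambda_1 > \lambda_2$, so $\lambda_1 = \lambda_2$ forces $G$ to be disconnected.

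For item (3) I reduce to a single component. Writing $\lambda_n = \min_i \lambda_i$ and using the union-of-spectra observation, $\lambda_1 = \max_j \rho(A_j)$ and $-\lambda_n = \max_j\bigl(-\lambda_{\min}(A_j)\bigr)$ over the components $G_j$. Because $-\lambda_{\min}(A_j) \le \rho(A_j) \le \rho(A) = \lambda_1 = -\lambda_n$ for every $j$, the component $G_{j_0}$ attaining $-\lambda_{\min}(A_{j_0}) = -\lambda_n$ must satisfy $\rho(A_{j_0}) = -\lambda_{\min}(A_{j_0})$, i.e. $-\rho$ is an eigenvalue of the connected graph $G_{j_0}$. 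The classical Perron--Frobenius dichotomy for irreducible matrices then says $-\rho(A_{j_0})$ is an eigenvalue precisely when $G_{j_0}$ is bipartite, producing the desired component.

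Item (4) splits into two directions. For the forward direction, bipartiteness of $G$ lets me write $A = \begin{pmatrix} 0 & M \\ M^{\T} & 0 \end{pmatrix}$ after ordering the two colour classes; if $\binom{x}{y}$ is a $\lambda$-eigenvector then $\binom{x}{-y}$ is a $(-\lambda)$-eigenvector, and this involution is a spectrum-preserving bijection, so the eigenvalue multiset is symmetric about $0$, which is exactly $\lambda_i = -\lambda_{n+1-i}$. The reverse direction is where the real work lies: symmetry of the spectrum gives $\sum_i \lambda_i^{2k+1} = 0$ for every $k \ge 0$, i.e. $\Tr(A^{2k+1}) = 0$; since this counts closed walks of odd length $2k+1$ and such counts are nonnegative, $G$ has no closed odd walk, hence no odd cycle, hence is bipartite (each component being bipartite). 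The main obstacle is precisely this converse, the one step that is not a direct invocation of Perron--Frobenius and instead requires the walk-counting interpretation of traces to translate the purely spectral symmetry back into the combinatorial statement of bipartiteness.
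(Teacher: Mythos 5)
Your proof is correct, but note that there is nothing in the paper to compare it against: the paper does not prove this lemma at all --- it is quoted as a known result from the cited reference (Brouwer--Haemers, \emph{Spectra of Graphs}). What you give is essentially the standard textbook argument: weak Perron--Frobenius for (1), simplicity of the Perron root of an irreducible nonnegative matrix for (2), reduction to a single component for (3), and the sign-flip involution together with odd-closed-walk counting for (4). The steps check out; in particular the squeeze $-\lambda_{\min}(A_{j_0}) \le \rho(A_{j_0}) \le \rho(A) = \lambda_1 = -\lambda_n = -\lambda_{\min}(A_{j_0})$ in (3) is right, and so is the conversion of spectral symmetry into $\Tr(A^{2k+1}) = 0$ for all $k$ and hence into the absence of odd closed walks, hence of odd cycles, in (4). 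The one caveat is self-containment: the ``classical Perron--Frobenius dichotomy'' you invoke to finish (3) --- for a connected graph, $-\rho$ is an eigenvalue if and only if the graph is bipartite --- is precisely the connected-graph core of items (3)--(4), so as written your proof of (3) leans on a statement nearly identical to the one being proved. It is a genuinely classical fact, so the appeal is legitimate, but if you want the argument self-contained, replace it by the equality-case eigenvector argument: if $Ax = -\rho x$ with $\|x\| = 1$ and $\rho > 0$, then $\rho = |x^{\T}Ax| \le |x|^{\T}A|x| \le \rho$ forces $|x|$ to be a Perron vector, hence positive, and forces $a_{ij}x_i x_j < 0$ whenever $a_{ij} = 1$; the sign classes of $x$ then give a bipartition of $G_{j_0}$ (and if $\rho = 0$ the component is an isolated vertex, trivially bipartite).
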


\begin{thm}
Let a connected bipartite graph $G$ be factored into $H$ and $K$.
Then by permutation, the adjacency matrices $A,B,C$ of $G,H,K$ respectively hold the following relation:
\begin{equation}\label{prod}
\begin{bmatrix}
O & A_{12} \\
A_{12}^\top & O
\end{bmatrix}
=\begin{bmatrix}
O & B_{12} \\
B_{12}^\top & O
\end{bmatrix}
\begin{bmatrix}
C_{11} & O\\
O & C_{22}
\end{bmatrix}.
\end{equation}
Consequently, one of $H,K$ is bipartite and the other is not connected.
\end{thm}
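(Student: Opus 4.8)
The plan is to exploit simultaneous diagonalization together with the two extreme eigenvectors of $A$. First I would record that since $A = BC$ is symmetric we have $BC = (BC)^\top = CB$, so $A$, $B$, $C$ pairwise commute; being symmetric they are simultaneously diagonalizable by \Cref{sim_diag}, and in a common orthonormal eigenbasis every eigenvalue of $A$ factors as the product of an eigenvalue of $B$ and one of $C$. Because $G$ is connected, $\lambda_{\max}(A)$ is simple with a strictly positive Perron eigenvector $u$, which (as in the proof of \Cref{eigen_max}) is a common eigenvector; write $Bu = bu$, $Cu = cu$ with $b = \lambda_{\max}(H) > 0$, $c = \lambda_{\max}(K) > 0$ and $bc = \lambda_{\max}(A)$. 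Ordering the vertices according to the bipartition $X \cup Y$ of $G$ puts $A$ in the stated block-antidiagonal form, and the signature matrix $S$ (equal to $+1$ on $X$ and $-1$ on $Y$) satisfies $AS = -SA$, so $Su$ is an eigenvector of $A$ with eigenvalue $-\lambda_{\max}(A)$. By \Cref{bipartite} the spectrum of the connected bipartite graph $G$ is symmetric, so $-\lambda_{\max}(A) = \lambda_{\min}(A)$ and, since $x \mapsto Sx$ biject the $\lambda$- and $(-\lambda)$-eigenspaces, this eigenvalue is also simple; hence $Su$ is a common eigenvector too, say $B(Su) = \beta\,(Su)$ and $C(Su) = \gamma\,(Su)$ with $\beta\gamma = -bc$.

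The crux is a pinching argument. Since $\beta$ and $\gamma$ are eigenvalues of the nonnegative symmetric matrices $B$ and $C$, we have $|\beta| \le b$ and $|\gamma| \le c$; but $|\beta\gamma| = bc$ forces equality, $|\beta| = b$ and $|\gamma| = c$, and as $\beta\gamma = -bc < 0$ exactly one of them is negative. Using the symmetric roles of $B$ and $C$ (recall $BC = CB$), I may assume $\beta = -b$ and $\gamma = c$. This is the step I expect to be the main obstacle, both in pinning down that $Su$ genuinely is a joint eigenvector (where simplicity of $\lambda_{\min}(A)$, and hence connectivity together with bipartiteness, are essential) and in converting these spectral equalities into the combinatorial block structure below.

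Finally I would read off the block forms from the vectors $u + Su$ and $u - Su$, which are supported on $X$ and on $Y$ respectively and are strictly positive there. From $\beta = -b$ one gets $B(u+Su) = b(u-Su)$ and $B(u-Su) = b(u+Su)$; writing $B$ in $(X,Y)$-blocks and using that $B \ge 0$ while $u|_X, u|_Y > 0$, the diagonal blocks $B_{11}$ and $B_{22}$ must vanish, so $H$ is bipartite with the same bipartition and $B = \left[\begin{smallmatrix} O & B_{12} \\ B_{12}^\top & O \end{smallmatrix}\right]$. From $\gamma = c$ one gets $C(u+Su) = c(u+Su)$, and the same nonnegativity argument forces the off-diagonal block $C_{12}$ to vanish, so $C = \left[\begin{smallmatrix} C_{11} & O \\ O & C_{22} \end{smallmatrix}\right]$ is block diagonal and $K$ has no edges between the nonempty sets $X$ and $Y$, hence is disconnected. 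Substituting these two forms into $A = BC$ then yields the displayed factorization \eqref{prod}, completing the argument.
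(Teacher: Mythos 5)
Your proposal is correct and takes essentially the same approach as the paper: both use commutativity and simultaneous diagonalization of $A,B,C$, the common Perron vector $u=[X;Y]$, the sign-flipped vector $[X;-Y]$ (your $Su$) as a simple eigenvector for $-\lambda_{\max}(G)$, and nonnegativity of the blocks against the positive subvectors $X,Y$ to force $B_{11}=B_{22}=O$ and $C_{12}=O$. The only real difference is cosmetic and in your favor: your explicit pinching step ($|\beta|\le\lambda_{\max}(H)$, $|\gamma|\le\lambda_{\max}(K)$, with $|\beta\gamma|=\lambda_{\max}(H)\lambda_{\max}(K)$ forcing equality) justifies what the paper merely asserts when it says $\mu$ must correspond to $\mp\lambda_{\max}(H)$ and $\pm\lambda_{\max}(K)$.
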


\begin{proof}
Let $A, B$ and $C$ be the adjacency matrices of $G$, $H$ and  $K$, respectively such that $A=B C$. Assume $A$ has the form in (\ref{prod}) and  $v=\left[\begin{array}{l}X \\ Y\end{array}\right]>0$ is the Perron vector corresponding to $\lambda_{\max }(G)$, where $X$ and $Y$ are the subvectors of $v$ indexed by two parts of $G$ respectively.
It follows from \Cref{bipartite}  that $\lambda_{\text {max }}(G)$ and $-\lambda_{\text {max }}(G)$ are simple and one can see that the vector $\mu=\left[\begin{array}{c}X \\ -Y\end{array}\right]$ is an eigenvector corresponding to the eigenvalue $-\lambda_{\max }(G)$.
It follows from \Cref{eigen_max} that
$\lambda_{\text {max }}(G)=\lambda_{\text {max }}(H) \lambda_{\text {max }}(K)$ and so $-\lambda_{\text {max }}(G)=-\lambda_{\text {max }}(H) \lambda_{\text {max }}(K)$.
Since $A, B$ and $ C$ mutually commute, and they are symmetric, they are simultaneously diagonetizable, and so we deduce that $\mu$ is an eigenvector corresponding to $-\lambda_{\max }(H)$ and $\lambda_{\text {max }}(K)$ or $\lambda_{\text {max }}(H)$ and $-\lambda_{\text {max }}(K)$.

Without loss of generality, assume that the first case occurs.
Let $B$ have a partition conform with $A$, namely,
$$B=\left[\begin{array}{ll}B_{11} & B_{12} \\ B_{12}^\top & B_{22}\end{array}\right].$$
Then one can see that
$$
\begin{array}{ll}
B=\left[\begin{array}{ll}B_{11} & B_{12} \\ B_{12}^\top & B_{22}\end{array}\right]\left[\begin{array}{l}
X \\
Y
\end{array}\right]=\lambda_{\max}{(H)}\left[\begin{array}{l}
X \\
Y
\end{array}\right],\left[\begin{array}{ll}B_{11} & B_{12} \\ B_{12}^\top & B_{22}\end{array}\right]\left[\begin{array}{c}
X \\
-Y
\end{array}\right]=-\lambda_{\text {max}}(H)\left[\begin{array}{c}
X \\
-Y
\end{array}\right]
\end{array}
$$
Equivalently $B_{11} X \pm B_{12} Y= \pm \lambda_{\text {max }}(H) X, ~ B_{12} ^{\top} X \pm B_{12} Y=\lambda_{\text {max }}(H) Y$.
Thus $B_{11} X=0$ and $B_{22}Y=0$ which implies that $B_{11}$ and $B_{22}$ are zero. This means that $B$ has the form in (\ref{prod}) and hence $H$ is bipartite, as desired.

Let $C$ have a partition conform with $A$, namely,
$$B=\left[\begin{array}{ll}C_{11} & C_{12} \\ C_{12}^\top & C_{22}\end{array}\right].$$
Then one can see that
$$ C \nu = \lambda_{\text {max }}(K) \nu, ~ C \mu = \lambda_{\text {max }}(K) \mu.$$
By a similar discussion, we have $C_{12}=0, C_{12}^\top=0$, which implies the desired form of $C$ in (\ref{prod}) and the disconnectedness of $K$.
\end{proof}

\begin{prop}\label{G_or_H}
Let a bipartite graph $G$ be factored into graphs $H$ and $K$.
Then at most one of the factors is connected.
\end{prop}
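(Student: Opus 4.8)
The plan is to argue by contradiction: assume both $H$ and $K$ are connected and derive a contradiction from the bipartiteness of $G$. As in the preceding results, write $A=BC$ with $A,B,C$ the adjacency matrices of $G,H,K$; these are symmetric and mutually commuting, hence simultaneously diagonalizable by \Cref{sim_diag}. I would fix a common orthonormal eigenbasis $u_1,\dots,u_n$ with $Bu_i=\beta_i u_i$, $Cu_i=\gamma_i u_i$, and therefore $Au_i=\beta_i\gamma_i u_i$, so that the eigenvalues of $G$ are exactly the products $\beta_i\gamma_i$.

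First I would locate a common Perron vector. Since $H$ is connected, $B$ is irreducible, so by Perron--Frobenius $\lambda_{\max}(H)=\rho(B)$ is a \emph{simple} eigenvalue of $B$ with a positive eigenvector; exactly one basis vector, say $u_1$, lies in this one-dimensional eigenspace, and we may take $u_1>0$. Being a common eigenvector, $u_1$ is also an eigenvector of the irreducible nonnegative matrix $C$; as a positive eigenvector it must be the Perron vector of $C$, so $\gamma_1=\rho(C)=\lambda_{\max}(K)$. Consequently $Au_1=\rho(B)\rho(C)\,u_1$ with $u_1>0$, so $\rho(B)\rho(C)$ is an eigenvalue of $A$; since $|\beta_i\gamma_i|\le\rho(B)\rho(C)$ for every $i$, this forces $\lambda_{\max}(G)=\rho(A)=\rho(B)\rho(C)$.

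Next I would bring in bipartiteness. By \Cref{bipartite}(4) the spectrum of $A$ is symmetric about $0$, so $-\rho(A)$ is an eigenvalue: there is an index $j$ with $\beta_j\gamma_j=-\rho(B)\rho(C)$. Because $|\beta_j|\le\rho(B)$ and $|\gamma_j|\le\rho(C)$, equality must hold in both, so $|\beta_j|=\rho(B)$ and $|\gamma_j|=\rho(C)$ with opposite signs; thus either $\gamma_j=\rho(C)$ or $\beta_j=\rho(B)$. In the first case $u_j$ lies in the $\rho(C)$-eigenspace of $C$, which is one-dimensional by connectedness of $K$ and hence spanned by $u_1$; in the second case $u_j$ lies in the $\rho(B)$-eigenspace of $B$, again spanned by $u_1$. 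Either way $u_j$ is parallel to $u_1$. But $\beta_j\gamma_j=-\rho(A)\ne\rho(A)=\beta_1\gamma_1$, so $j\ne1$ and $u_j\perp u_1$, a contradiction.

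The crux, and the step I expect to demand the most care, is the Perron-vector bookkeeping in the second paragraph: showing that connectedness of \emph{both} factors forces a single strictly positive vector to be simultaneously the Perron vector of $B$ and of $C$, together with the simplicity of each Perron root. Once that simplicity is secured, the contradiction is immediate, since realizing the eigenvalue $-\rho(A)$ as a product $\beta_j\gamma_j$ necessarily reuses one factor's simple positive Perron root at a vector orthogonal to its unique Perron eigenvector. Note this argument covers disconnected $G$ as well, thereby subsuming the connected bipartite case handled earlier. The only genuine exception is the degenerate single-vertex factor, which would make $\rho(B)\rho(C)=0$ and hence $G$ edgeless; this is excluded by assuming, as elsewhere, that the factors have no isolated vertices.
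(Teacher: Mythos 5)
Your proof is correct, but it takes a genuinely different route from the paper's. The paper's proof is combinatorial-then-spectral: since $H$ is connected, \Cref{deg} forces $K$ to be regular, and symmetrically $H$ is regular; then \Cref{degree} makes $G$ an $rs$-regular graph, so $\lambda_{\max}(G)=rs$, bipartiteness supplies the eigenvalue $-rs$, and the contradiction comes from the simplicity of the Perron roots $r$ and $s$ of the connected regular factors. You skip the regularity reduction entirely and argue purely with Perron--Frobenius theory for irreducible nonnegative matrices: a common orthonormal eigenbasis, a shared positive Perron vector $u_1$ giving $\rho(A)=\rho(B)\rho(C)$, and then the observation that realizing $-\rho(A)$ as $\beta_j\gamma_j$ forces $u_j$ into a one-dimensional Perron eigenspace already spanned by $u_1$, contradicting orthogonality. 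What the paper's route buys is brevity given the lemmas already on hand, and concreteness (once everything is regular, the common Perron vector is just $j$); the cost is dependence on the degree lemma imported from \cite{Manjunatha}, and a final sentence (``the multiplicity of $s$ and $r$ is one, leading to a contradiction'') that is left quite terse. What your route buys is exactly that missing detail: your orthogonality argument is the fully spelled-out version of the contradiction the paper only gestures at, and it is self-contained apart from \Cref{sim_diag} and standard Perron--Frobenius facts. One shared caveat: both arguments implicitly assume the factors have an edge (i.e., $\rho(B),\rho(C)>0$); you flag the degenerate single-vertex case explicitly, which the paper does not, though in that case $G=K_1$ and the issue is only one of convention about whether $K_1$ counts as bipartite.
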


\begin{proof}
Let us assume, for contradiction, that both $H$ and $K$ are connected.
By \Cref{deg}, we deduce that both $H$ and $K$ are regular.
Suppose $H$ and $K$ are $r$-regular and $s$-regular, respectively.
According to \Cref{degree}, $G$ is also $rs$-regular.
Hence, $rs$ is the largest eigenvalue of $G$.
As by \Cref{bipartite}(4), $-rs$ is also an eigenvalue of $G$.
However, \Cref{bipartite}(2) implies that the multiplicity of $s$ and $r$ is one, leading to a contradiction, as stated in \Cref{bipartite}(1).
\end{proof}


\section{Connectivity  of Matrix Products}\label{conn_sec}

\begin{lemma}{\rm\cite[Lemma 10.3.3]{Agg}}\label{connected}
A graph $G$ of order $n$ with the adjacency matrix $A$ is connected if and only if $\sum_{i=0}^{n-1} A^i>0$.
\end{lemma}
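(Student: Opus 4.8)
The plan is to exploit the standard combinatorial interpretation of the powers of the adjacency matrix: for every nonnegative integer $k$, the $(i,j)$ entry $(A^k)_{ij}$ counts the walks of length $k$ from vertex $i$ to vertex $j$, with the convention $A^0 = I$ recording the single trivial walk of length $0$ at each vertex. Writing $M = \sum_{k=0}^{n-1} A^k$, each entry $M_{ij}$ is then the total number of walks from $i$ to $j$ of length at most $n-1$; since all entries of every $A^k$ are nonnegative, $M_{ij} > 0$ holds precisely when at least one such walk exists. This reduces the whole statement to a walk-existence question that can be read off $M$ entrywise.

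First I would dispose of the diagonal. Because the $A^0 = I$ summand contributes $1$ to each diagonal position, we have $M_{ii} \geq 1 > 0$ for every vertex $i$, regardless of connectivity. Hence the content of the claim lies entirely in the off-diagonal entries $M_{ij}$ with $i \neq j$, for which the $A^0$ term contributes nothing and positivity is equivalent to the existence of a walk of length between $1$ and $n-1$ joining $i$ and $j$.

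For the forward implication I would argue directly from the walk interpretation: if $M > 0$, then for every pair $i \neq j$ the entry $M_{ij}$ is positive, so some walk connects $i$ to $j$, whence every pair of vertices lies in a common component and $G$ is connected. For the reverse implication, suppose $G$ is connected and fix $i \neq j$; then there is a path from $i$ to $j$, and I would invoke the elementary fact that a shortest such path visits distinct vertices and therefore has length at most $n-1$. This produces a walk of some length $\ell$ with $1 \leq \ell \leq n-1$, forcing $(A^\ell)_{ij} > 0$ and hence $M_{ij} > 0$; combined with the diagonal bound this gives $M > 0$.

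The only genuine subtlety, and the step I would be most careful with, is the length bound $\ell \leq n-1$ in the reverse direction: one must pass from an arbitrary connecting walk to a shortest path, note that any repeated vertex can be excised to shorten the walk, and conclude that a path in a graph on $n$ vertices uses at most $n-1$ edges. Everything else is a routine entrywise reading of the nonnegative matrix $M$.
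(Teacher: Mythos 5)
Your proof is correct: the walk-counting interpretation of $(A^k)_{ij}$, the diagonal handled by $A^0 = I$, and the shortest-path bound of $n-1$ edges together give exactly the standard argument for this fact. Note that the paper does not prove this lemma at all --- it is quoted from the reference \cite[Lemma 10.3.3]{Agg} --- and your argument is precisely the textbook proof that such a citation stands in for, so there is nothing to reconcile.
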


\begin{lemma}\label{lem:A>0}
Let $G$ be a non-bipartite graph with adjacency matrix $A$.
Then $G$ is connected if and only if there exists a positive integer $k$ such that  $A^k>0$.
\end{lemma}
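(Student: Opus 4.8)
The plan is to prove both directions, with the nontrivial content lying in the ``only if'' direction, since the converse is almost immediate. For the easy direction, suppose $A^k > 0$ for some positive integer $k$. Since every entry of $A^k$ counts walks of length $k$ between pairs of vertices, positivity of $A^k$ means that every pair of vertices (including each vertex to itself) is joined by a walk, hence by a path. Therefore $G$ is connected. This requires no hypothesis on bipartiteness at all.

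For the ``only if'' direction, assume $G$ is connected and non-bipartite, and I want to produce a single power $A^k$ that is strictly positive. The natural tool is \Cref{connected}, which tells me that $M := \sum_{i=0}^{n-1} A^i > 0$. The obstacle is that this gives a \emph{sum} of powers being positive, not a single power; a priori the positivity in different entries could come from different terms $A^i$, corresponding to walks of different lengths. The key observation that resolves this is the following: in a connected non-bipartite graph, the greatest common divisor of the cycle lengths is $1$, equivalently the graph is aperiodic when viewed as the state graph of a walk. For any two vertices $u,v$, there is a walk of some length $\ell$ from $u$ to $v$; and because $G$ is non-bipartite it contains an odd closed walk, so by attaching an even closed walk (traversing an edge back and forth) and an odd closed walk appropriately, one can realize $u$-to-$v$ walks of every sufficiently large length, of both parities.

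Concretely, I would argue that there exists an integer $N$ such that for every ordered pair $(u,v)$ and every $k \ge N$, there is a $u$--$v$ walk of length exactly $k$; taking $k = N$ then gives $(A^k)_{uv} > 0$ for all $u,v$, i.e.\ $A^k > 0$. To see the uniform bound: connectedness gives a walk of length at most $n-1$ between any $u,v$; non-bipartiteness gives an odd closed walk of length at most, say, $2n-1$ based at some vertex, which together with the trivial length-$2$ back-and-forth closed walk lets me adjust the parity and length of any walk freely once it is long enough. Standard Perron--Frobenius / Markov-chain theory packages this as: a nonnegative symmetric matrix $A$ whose underlying graph is connected is \emph{primitive} (some power is strictly positive) precisely when it is aperiodic, and symmetry of $A$ means aperiodicity fails only for bipartite graphs.

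The main obstacle, and the step I would be most careful about, is the passage from ``sum of powers is positive'' to ``a single power is positive'', i.e.\ establishing aperiodicity and the uniform length bound rigorously rather than hand-waving the parity adjustment. I would handle it by invoking the characterization of primitive nonnegative matrices: a nonnegative matrix $A$ is primitive if and only if it is irreducible and the $\gcd$ of the lengths of its closed walks equals $1$. Irreducibility of $A$ is exactly connectedness of $G$ (\Cref{connected}), and for a symmetric $A$ each edge yields a closed walk of length $2$, so the $\gcd$ of closed-walk lengths is $1$ if and only if there is also an odd closed walk, which holds if and only if $G$ is non-bipartite. Thus $A$ is primitive, and by definition some power $A^k$ is positive, completing the proof.
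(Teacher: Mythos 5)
Your proof is correct, and its combinatorial core coincides with the paper's: both arguments exploit the fact that an odd closed walk, combined with the length-two closed walks along edges, lets one realize $u$--$v$ walks of every sufficiently large length of either parity, which is exactly what is needed to upgrade ``a sum of powers is positive'' to ``a single power is positive''. The difference is in how this is made rigorous. The paper carries out the construction explicitly: it fixes an odd cycle $C_r$, routes every pair $u,v$ through $C_r$ via shortest paths, pads the resulting walk with back-and-forth steps and extra loops around $C_r$, and thereby exhibits a concrete exponent $k=\ell+r$ (where $\ell$ is the maximum length of the routed walks) with $A^{\ell+r}>0$. You instead delegate the rigor to the standard Perron--Frobenius characterization: a nonnegative matrix is primitive if and only if it is irreducible and the $\gcd$ of its closed-walk lengths is $1$; connectedness gives irreducibility, any edge gives a closed walk of length $2$, and non-bipartiteness gives an odd closed walk, so the $\gcd$ is $1$ and some power of $A$ is positive. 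That is a legitimate and shorter finish, and it places the lemma in its natural context (it is precisely the statement that a connected graph has a primitive adjacency matrix if and only if it is non-bipartite), but it rests on a cited theorem the paper does not invoke; the paper's argument is self-contained and elementary and, as a bonus, yields an explicit bound on the exponent. Your converse direction matches the paper's in substance (positivity of $A^k$ gives walks, hence paths, between all pairs), and is if anything stated more cleanly than the paper's appeal to $\sum_{i=1}^n A^i>0$ followed by \Cref{connected}.
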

\begin{proof}
    We first assume that $G$ is connected.
    Let $C_r$ be an odd cycle in $G$ and for every vertex $v$, let $P_v$ be a shortest path between $v$ and $C_r$.
    Assume that $u$ and $v$ are two vertices of $G$ such that $V(C_r)\cap V(P_u)=\{x\}$ and $V(C_r)\cap V(P_v)=\{y\}$.
    Let $x^{\prime}$  be a vertex of $G$ before $x$ on $P_u$, see \Cref{pupv}.
    Obviously, any positive integer $l \geq r$ can be written as a linear combination of 2 and $r$ with non-negative integers as coefficients.
    Let $xC_ry$ be a shortest path between $x$ and $y$ on $C_r$.
    Suppose that $P_u x C_r y P_v$ is a walk between $u$ and $v$ of length $\ell(u,v)$.
    Let $\ell=\max_{u, v \in V(G)} \ell(u, v)$.
    We claim that $A^{\ell+r}>0$.
    For every $u$ and $v$, we show that there is a walk of length $\ell+r$ between $u$ and $v$. Because $\ell-\ell(u, v)+r \geq r$ and so
    $\ell -\ell(u, v)+r=2 a+r b$, where $a, b \geqslant 0$ are integers.
    Now, start at vertex $u$ and move along $P_u$ until $x$ and then repeat $xx^{\prime} x$, $a$ times and then move along $C_r, b$ times to arrive in $x$. Then move along $xC_ry$ and finally $y P_v v$, to obtain a walk of length $2 a+r b+\ell(u, v)=l+r$, between $u$ and $v$.
    This walk completes the proof.
    \begin{figure}[H]
        \centering
        \includegraphics[scale=0.9]{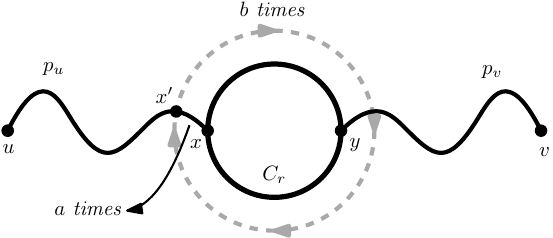}
        \caption{The picture used in the proof of \Cref{lem:A>0}.}
        \label{pupv}
    \end{figure}
    For the backward implication, we note that $\sum_{i=1}^n A^i>0$, where $|V(G)|=n$.
    So \Cref{connected} completes the proof.
\end{proof}

\begin{remark}\label{bipar_power}
By the same method one can show that if $G$ is a connected bipartite graph, with the adjacency matrix \[ A = \left[ \begin{array}{c|c} 0 & B \\ \hline\\[-12.5pt] B^{T} & 0 \end{array} \right] ,\]
then there exists an even integer $k$ such that $\left(B B^{T}\right)^k>0$, and there exists an odd integer $k$ such that $A^k=\left[\begin{array}{c|c}0 & C \\\hline\\[-12.5pt] C^{T} & 0\end{array}\right]$, where $C>0$.
The converse is also true.
\end{remark}

\begin{lemma}\label{no_zero_row}
Let $G$ be a graph with no isolated vertex, and $A$ be the adjacency matrix of $G$. Then for every positive integer $k, A^k$ has no zero row.
\end{lemma}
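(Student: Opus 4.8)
The plan is to use the standard combinatorial interpretation of matrix powers: for the adjacency matrix $A$ of $G$, the $(i,j)$ entry of $A^k$ equals the number of walks of length $k$ from vertex $i$ to vertex $j$. Hence the $i$-th row of $A^k$ is zero precisely when there is no walk of length $k$ starting at vertex $i$. To prove the lemma it therefore suffices to exhibit, for each vertex $v$, at least one walk of length $k$ beginning at $v$.

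First I would fix an arbitrary vertex $v$. Since $G$ has no isolated vertex, $v$ has a neighbor $w$, so the edge $\{v,w\}$ is available. The key construction is the back-and-forth walk that alternates between $v$ and $w$, namely $v, w, v, w, \dots$; after $k$ steps this is a perfectly well-defined walk, since a walk may traverse the same edge repeatedly. This immediately produces a walk of length $k$ starting at $v$, regardless of how large $k$ is.

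The only point requiring a small case distinction is the parity of $k$, and this is where one must be mildly careful rather than where any real difficulty lies. If $k$ is even, the alternating walk returns to $v$, so $(A^k)_{vv} \geq 1$; if $k$ is odd, it terminates at $w$, so $(A^k)_{vw} \geq 1$. In either case the $v$-th row of $A^k$ contains a positive entry and is therefore nonzero. Since $v$ was arbitrary, no row of $A^k$ is zero, which is the claim.

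I expect no genuine obstacle here, as the statement becomes elementary once the walk-counting interpretation is invoked. An alternative route is a short induction on $k$: the base case $k=1$ holds because the no-isolated-vertex hypothesis forces every row of $A$ to be nonzero, and the inductive step uses the fact that any vertex reached after $k$ steps again has a neighbor, so a nonzero entry of $A^k$ propagates to a nonzero entry of $A^{k+1}=A^k A$. I would nonetheless prefer the direct back-and-forth argument, since it avoids the induction entirely and makes the role of the hypothesis transparent.
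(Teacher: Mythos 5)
Your proof is correct, and it takes a combinatorial route where the paper argues algebraically. The paper's proof observes that the no-isolated-vertex hypothesis gives $Aj>0$ (with $j$ the all-ones vector) and then iterates: $A^{k}j=A\left(A^{k-1}j\right)>0$, since a nonnegative matrix with no zero row maps positive vectors to positive vectors; positive row sums of $A^k$ then mean no zero row. Your back-and-forth walk $v,w,v,w,\dots$ is the constructive counterpart of this: instead of showing each row \emph{sum} of $A^k$ is positive, you exhibit one explicit positive entry in each row, namely $(A^k)_{vv}$ or $(A^k)_{vw}$ according to the parity of $k$. The content is the same --- $(A^kj)_v$ counts walks of length $k$ starting at $v$, and your walk witnesses that this count is nonzero --- and indeed the inductive alternative you sketch at the end is essentially the paper's proof restated entrywise. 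One incidental advantage of your version: it is immune to the slip in the paper's write-up, which asserts that ``$A^k>0$'' for every $k$ (false in general, e.g.\ for any disconnected or bipartite $G$), when what is actually established and needed there is $A^kj>0$; your argument never invites that confusion, since it only ever claims a single entry per row is positive.
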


\begin{proof}
Since $A$ has no isolated vertex, so $Aj>0$. Thus $A^2 j=A\left(Aj\right)>0$. This implies that for every positive integer $k, A^k>0$. This means that $A^k$ has no zero row.
\end{proof}

\begin{remark}\label{matching}
Note that the disjoint union of two copies of a connected and non-bipartite graph $G$ with the adjacency matrix $A$ can be factored into a connected graph $H$ and a disconnected graph $K$, where  $K$ is a disjoint union of $n$ copies of the complete graph $K_2$ and $H$ has the following adjacency matrix.
$$\quad\left[\begin{array}{c|c}A & 0 \\ \hline 0 & A\end{array}\right]=\left[\begin{array}{c|c}0 & A \\ \hline A & 0\end{array}\right]\left[\begin{array}{c|c}0 & I_n \\ \hline I_n & 0\end{array}\right].$$
Since $G$ is connected and non-bipartite, \Cref{connected} implies that there exists a $k$ such that $A^k>0$.
So we can assume there are an odd integer $k_1$ and an even integer $k_2$ such that $A^{k_1}>0$ and $A^{k_2}>0$.
Now one can see that $(AA^T)^{k_2}=A^{2k_2}>0$.
So we have shown that there is an even integer $k_2$ such that $(AA^T)^{k_2}>0$.
Next, we consider the $k_1$-th power of the adjacency matrix of $H$:
$$\left[\begin{array}{c|c} 0 & A^{k_1} \\ \hline A^{k_1} & 0\end{array}\right]$$
We note that $A^{k_1}>0$.
Now it follows from \Cref{bipar_power} that $H$ is connected.
\end{remark}

\begin{thm}\label{connectedness}
Let a graph $G$ be factored into a connected graph $H$ and a graph $K$ with no isolated vertex.
Then the following holds:
\end{thm}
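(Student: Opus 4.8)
The plan is to use that $A=BC$ is symmetric: then $BC=(BC)^{\top}=CB$, so $B$ and $C$ commute and $A^{m}=B^{m}C^{m}$ for every $m$. For the first assertion (if $H$ is non-bipartite then $G$ is connected), I would note that $H$ connected and non-bipartite gives, by \Cref{lem:A>0}, an integer $k$ with $B^{k}>0$, while $K$ having no isolated vertex gives, by \Cref{no_zero_row}, that $C^{k}$ has no zero row and hence (being symmetric) no zero column. Writing $(A^{k})_{ij}=\sum_{\ell}(B^{k})_{i\ell}(C^{k})_{\ell j}$ with every $(B^{k})_{i\ell}>0$ and column $j$ of $C^{k}$ nonnegative and nonzero, each entry of $A^{k}=B^{k}C^{k}$ is strictly positive. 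Thus $A^{k}>0$, so any two vertices of $G$ are joined by a walk and $G$ is connected.

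For the second assertion, suppose $H$ is bipartite and $G$ is disconnected. Since $H$ is connected, all vertices are pairwise connected in $H$, so \Cref{deg} forces $K$ to be $s$-regular for some $s\ge 1$ (as $K$ has no isolated vertex), and $\lambda_{\max}(K)=s$. Next I would invoke the strengthened form of \Cref{eigen_max}: since $H$ is connected, $A,B,C$ share a common positive Perron vector $u$ with $Bu=\rho u$, $Cu=su$, $Au=\rho s\,u$, where $\rho=\lambda_{\max}(H)>0$, and each component of $G$ has spectral radius $\rho s$. Because $H$ is connected and bipartite, \Cref{bipartite} shows that $\rho$ and $-\rho$ are both simple eigenvalues of $B$; the $-\rho$-eigenvector is the vector $\mu$ obtained from $u$ by negating the coordinates of one part of $H$. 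Since $B$ and $C$ are simultaneously diagonalizable (\Cref{sim_diag}) and $-\rho$ is simple, $\mu$ is also an eigenvector of $C$, say $C\mu=\delta\mu$ with $|\delta|\le s$, and then $A\mu=-\rho\delta\,\mu$.

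The crux is to prove $\delta=-s$, and this is the step I expect to be the main obstacle. Diagonalizing $A,B,C$ simultaneously as $Av_{i}=\beta_{i}\gamma_{i}v_{i}$ with $|\beta_{i}|\le\rho$ and $|\gamma_{i}|\le s$, an eigenvalue $\beta_{i}\gamma_{i}$ can equal $\rho s$ only when $|\beta_{i}|=\rho$, $|\gamma_{i}|=s$ and the signs agree, i.e. $(\beta_{i},\gamma_{i})\in\{(\rho,s),(-\rho,-s)\}$; simplicity of $\pm\rho$ means the first case arises only from $u$ and the second only from $\mu$, the latter contributing $\rho s$ exactly when $\delta=-s$. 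Hence $\rho s$ has multiplicity at most $2$ in $A$. On the other hand, disconnectedness of $G$ produces at least two components, each of spectral radius $\rho s$ with its own positive Perron eigenvector of disjoint support; these are orthogonal, so $\rho s$ has multiplicity at least $2$. Therefore $\delta=-s$. Substituting $Cu=su$ and $C\mu=-s\mu$ into the block form of $C$ along the bipartition $(V_{1},V_{2})$ of $H$, with $u=(X,Y)^{\top}$ and $\mu=(X,-Y)^{\top}$, $X,Y>0$, yields $2C_{11}X=0$ and $2C_{22}Y=0$; positivity of $X,Y$ together with $C\ge 0$ forces $C_{11}=C_{22}=0$, so $K$ is bipartite with parts $V_{1},V_{2}$. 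Finally, counting edges of the $s$-regular bipartite graph $K$ gives $s|V_{1}|=s|V_{2}|$, and $s\ge 1$ yields $|V_{1}|=|V_{2}|$, whence $n=|V_{1}|+|V_{2}|$ is even. The only delicate point is the passage from ``$G$ disconnected'' to ``$\delta=-s$'', which is precisely the multiplicity count above and relies on the strengthened \Cref{eigen_max} supplying a common positive Perron vector and equal component spectral radii.
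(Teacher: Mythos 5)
Your proof of part (i) and of the first half of part (ii) is correct and follows essentially the same route as the paper: part (i) is the identical argument ($B^k>0$ via \Cref{lem:A>0}, no zero columns of $C^k$ via \Cref{no_zero_row} and symmetry, hence $A^k=B^kC^k>0$); and for part (ii) the paper likewise uses the common Perron vector, the simplicity of $\pm\lambda_{\max}(H)$ for the connected bipartite $H$, and the fact that disconnectedness of $G$ forces $\lambda_{\max}(G)$ to have multiplicity at least $2$, in order to conclude that $(X,-Y)^\top$ is a $(-s)$-eigenvector of $C$, followed by the same block computation $C_{11}X=0$, $C_{22}Y=0$. Indeed, your explicit count---at most two simultaneous eigenvectors can realize the eigenvalue $\rho s$, one parallel to $u$ and one parallel to $\mu$, the latter only when $\delta=-s$---is a more careful justification of a step the paper states rather tersely (``since the multiplicity of $-\lambda_{\max}(H)$ is $1$ and the multiplicity of $\lambda_{\max}(G)$ is at least $2$, we infer that $-k$ is an eigenvalue of $K$'').

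However, your proof is incomplete: part (ii) of the theorem also asserts ``Moreover, $G$ is a disjoint union of two non-bipartite connected graphs,'' and you never address this. The paper finishes as follows: once $B$ and $C$ both have anti-diagonal block form, the product $A=BC$ is block diagonal with diagonal blocks $B_{12}C_{12}^\top$ and $B_{12}^\top C_{12}$, so $G$ is a disjoint union of two graphs $G_1,G_2$ supported on the two parts; then by \Cref{bipar_power} there is an even $k$ with $B^k$ block diagonal with positive blocks, and since $C^k$ has no zero row (\Cref{no_zero_row}), $A^k=B^kC^k$ is block diagonal with strictly positive blocks, which forces $G_1$ and $G_2$ to be connected and non-bipartite. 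Note that your own machinery nearly yields this for free: your bound ``multiplicity of $\rho s$ in $A$ is at most $2$'' combined with ``each component of $G$ contributes at least $1$'' shows $G$ has exactly two components, and if one of them were bipartite then $-\rho s$ would be an eigenvalue of $A$, which is impossible since every simultaneous eigenvector attaining $|\beta_i\gamma_i|=\rho s$ gives $\beta_i\gamma_i=+\rho s$ (both $u$ and $\mu$ do, once $\delta=-s$). So the gap is fillable within your framework, but as written the final claim of (ii) is simply missing.
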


\begin{enumerate}[label=\rm(\roman*)]
    \item  If $H$ is non-bipartite, then $G$ is connected.
    \item  If $H$ is bipartite and $G$ is not connected, then $K$ is a regular bipartite graph and consequently $n$ is even. Moreover, $G$ is a disjoint union of two non-bipartite connected graphs.
\end{enumerate}

\begin{proof}
Let $A, B$, and $C$ be the adjacency matrices of the graphs $G, H$, and $K$, respectively.
Then we have $A=BC$.
If $H$ is non-bipartite, then by \Cref{lem:A>0}, there exists  positive integer $k$ such that $B^k>0$.
Now, by \Cref{no_zero_row} we know that $A^k=B^k C^k>0$, and so it follows from \Cref{connected} that $G$ is connected, as desired.

Next, assume that $H$ is bipartite and $G$ is not connected.
By \Cref{deg}, we know that $K$ is a $k$-regular graph.
We have the following:
\begin{align*}
    B=\left[\begin{array}{cc}0 & B_{12} \\ B_{12}^\top & 0\end{array}\right] \textit{ and } v=\begin{bmatrix}
        X\\Y
    \end{bmatrix}>0,
\end{align*}
where $v$ is the Perron vector corresponding to $\lambda_{\max }(H)$.
Since $A, B, C$ mutually commute, and they are symmetric, they are simultaneously diagonalizable.
Since $H$ is connected $\lambda_{\max}(H)$ is simple and so $v$ is an eigenvector corresponding to $\lambda_{\text {max}}(G)$ and moreover we know that  $k=\lambda_{\text {max}}(K)$.
Hence we have $Av=\lambda_{\max }(G)v$.
Since $G$ is not connected, we conclude that the multiplicity of $\lambda_{\text {max }}(G)$ is at least $2$.
It follows from the proof of \Cref{eigen_max} that $\lambda_{\text {max}}(G)=\lambda_{\text {max}}(H) \lambda_{\text {max}}(K)$.
It is not hard to see that $\left[\begin{array}{c}X \\ -Y\end{array}\right]$ is an eigenvector corresponding to the eigenvalue $-\lambda_{\text {max }}(H)$ for $H$.
By \Cref{bipartite}(4), we know that the multiplicity $-\lambda_{\max }(H)$ is 1.
Since the multiplicity $-\lambda_{\max }(H)$ is 1, and the multiplicity $\lambda_{\max}(G)$ is at least $2$,
we infer that $-k$ is also an eigenvalue of $K$.
It is easy to see that $\left[\begin{array}{c}X \\ -Y\end{array}\right]$ is an eigenvector corresponding to the eigenvalue $-k$ of the graph $K$.

Let $C=\left[\begin{array}{ll}C_{11} & C_{12} \\ C_{12}^\top & C_{22}\end{array}\right]$ be the adjacency matrix of $K$, where $Q $ is an $|X| \times |Y|$-matrix.
Then the following hold:
$$
\left[\begin{array}{cc}
C_{11} & C_{12} \\ C_{12}^\top & C_{22}
\end{array}\right]\left[\begin{array}{l}
X \\
Y
\end{array}\right]=k\left[\begin{array}{l}
X \\
Y
\end{array}\right],\left[\begin{array}{ll}
C_{11} & C_{12} \\ C_{12}^\top & C_{22}
\end{array}\right]\left[\begin{array}{c}
X \\
Y
\end{array}\right]=-k\left[\begin{array}{c}
X \\
-Y
\end{array}\right]
$$
\noindent Thus we find that $C_{11} X \pm C_{12} Y= \pm k X, C_{12}^\top X \pm C_{22} Y= kY$.
These imply that $C_{11} X=0$ and $C_{22}Y=0$.
Since $X>0$ and $Y>0$, we have $C_{11}=0$ and $C_{22}=0$.
So $K$ is bipartite.
If $|V(G)|=n$, then since $K$ is regular bipartite, we have $n$ is even.

Thus we have shown that $C=\left[\begin{array}{ll}0 & C_{12} \\ C_{12}^\top & 0\end{array}\right]$ and so $A=\left[\begin{array}{cc}B_{12}C_{12}^\top & 0 \\ 0 & B_{12}^\top C_{12}\end{array}\right]$.
This shows that $G$ is a disjoint union of two connected graphs $G_1,G_2$.
Next, we show that $G_1,G_2$ are both connected.
By \Cref{bipar_power}, there exists a positive even integer $k$ such that $B^k=\left[\begin{array}{cc}D & 0 \\ 0 & E\end{array}\right]$ with $D>0$ and $E>0$. So we have $A^k=B^k C^k$ and since $K$ has no isolated vertex by \Cref{no_zero_row}, we conclude that $A^k=\left[\begin{array}{cc}M & 0 \\  0 & N\end{array}\right]$, where $M>0, N>0$. So $G_1,G_2$ are both non-bipartite and connected.
\end{proof}

\noindent Next, it follows from \Cref{connectedness} the following Corollaries:

\begin{corollary}
Let a graph $G$ of order $n$ be factored into a connected graph $H$ and a graph $K$ with no isolated vertex.
If $n$ is odd, then $G$ is connected.
\end{corollary}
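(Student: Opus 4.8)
The plan is to derive this corollary directly from \Cref{connectedness} by a case analysis on whether the connected factor $H$ is bipartite. Both parts of \Cref{connectedness} are available here, since by hypothesis $H$ is connected and $K$ has no isolated vertex, which are exactly the standing assumptions of that theorem.

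First I would dispose of the case where $H$ is non-bipartite. In this case part (i) of \Cref{connectedness} applies verbatim and immediately yields that $G$ is connected, with no appeal to the parity of $n$ at all. So the odd-order hypothesis is only needed in the complementary case.

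Next I would handle the case where $H$ is bipartite, arguing by contraposition. Suppose, toward a contradiction, that $G$ is not connected. Then the hypotheses of part (ii) of \Cref{connectedness} are met, and that part forces $K$ to be a regular bipartite graph and, consequently, $n$ to be even. This contradicts the hypothesis that $n$ is odd. Hence $G$ must be connected in this case as well, and combining the two cases gives the claim.

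I do not expect a genuine obstacle here, since all the substantive work is already packaged inside \Cref{connectedness}; the corollary is essentially a bookkeeping consequence. The only points meriting a moment of care are confirming that the two cases (\emph{$H$ bipartite} versus \emph{$H$ non-bipartite}) are exhaustive, and observing that the odd-order hypothesis is precisely what excludes the disconnected-bipartite scenario produced by part (ii). Since the assumptions on $H$ and $K$ transfer unchanged to both invocations of \Cref{connectedness}, the argument closes without any additional computation.
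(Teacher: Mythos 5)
Your proposal is correct and takes essentially the same route as the paper: the paper states this corollary with no written proof, presenting it as an immediate consequence of \Cref{connectedness}, and your two-case argument (part (i) when $H$ is non-bipartite; part (ii) forcing $n$ even when $H$ is bipartite and $G$ is assumed disconnected, contradicting $n$ odd) is precisely that intended derivation. There are no gaps: the two cases are exhaustive and the hypotheses on $H$ and $K$ carry over verbatim to both invocations of the theorem.
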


\begin{corollary}
Let a disconnected graph $G$ be factored into a connected graph $H$ and a graph $K$ with no isolated vertex. Then $H$ and $K$ are both bipartite.
\end{corollary}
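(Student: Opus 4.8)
The plan is to read both conclusions off directly from the two parts of \Cref{connectedness}, whose hypotheses coincide verbatim with those of the corollary: $G$ is factored into a connected graph $H$ and a graph $K$ with no isolated vertex. So the whole argument is a contrapositive application of part (i) followed by a direct invocation of part (ii).

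First I would establish that $H$ must be bipartite by contraposition of part (i). Part (i) of \Cref{connectedness} asserts that if $H$ is non-bipartite, then $G$ is connected. Since here $G$ is assumed disconnected, $H$ cannot be non-bipartite; hence $H$ is bipartite. This is the only place where the disconnectedness hypothesis and the connectedness of $H$ are used together.

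With $H$ now known to be bipartite and $G$ still disconnected, I would invoke part (ii) of \Cref{connectedness}, which under exactly these hypotheses concludes that $K$ is a regular bipartite graph. In particular $K$ is bipartite, and combining this with the previous step shows that both factors $H$ and $K$ are bipartite, which is the desired conclusion.

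I expect no genuine obstacle here, since the substantive content has already been packaged into \Cref{connectedness}; the only thing to check is that the corollary's hypotheses (``$G$ disconnected, $H$ connected, $K$ without isolated vertices'') are precisely those needed to trigger both the contrapositive of (i) and part (ii). It is worth remarking that part (ii) in fact delivers more than the bipartiteness of $K$---namely its regularity together with the evenness of $n$ and the splitting of $G$ into two non-bipartite connected pieces---but for this corollary only the bipartiteness of the two factors needs to be extracted.
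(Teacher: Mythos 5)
Your proposal is correct and matches the paper's intended argument exactly: the corollary is stated as an immediate consequence of \Cref{connectedness}, obtained by applying the contrapositive of part (i) to conclude that $H$ is bipartite, and then invoking part (ii) to conclude that $K$ is (regular) bipartite. Nothing further is needed.
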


\begin{problem}
    In Part (ii) of \Cref{connectedness}, is it true that two connected components of $G$ are isomorphic? 
\end{problem}

\section{Factorizations of trees and forests}\label{tree_sec}

\begin{lemma}\label{C4free}
Let a graph $G$ of order $n$ be factored into $H$ and $K$.
If $G$ contains neither $C_4$ nor isolated vertices, then $n$ is even.
\end{lemma}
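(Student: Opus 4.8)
The plan is to extract from the identity $A=BC$ a purely local condition on degrees and then turn it into three parity counts whose sum is $n$. Writing $N_H(v)$ and $N_K(v)$ for the neighbourhoods of $v$ in $H$ and $K$, the entry identity $A_{ij}=\sum_k B_{ik}C_{kj}=|N_H(i)\cap N_K(j)|$ is the workhorse: an edge $ij$ of $G$ is ``witnessed'' by a pivot $k$ with $i\sim_H k\sim_K j$. First I would record the elementary facts that the symmetry of $A$ forces $BC=CB$, that $\dd_G(v)=\dd_H(v)\dd_K(v)$ by \Cref{degree} so that $G$ having no isolated vertex makes $H$ and $K$ have none either, and that being $C_4$-free is equivalent to the statement that every two distinct vertices of $G$ have at most one common neighbour.

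The crucial step is the \emph{degree dichotomy}: for every vertex $v$, at least one of $\dd_H(v),\dd_K(v)$ equals $1$. I would prove this by contradiction. If $\dd_H(v)\ge 2$ and $\dd_K(v)\ge 2$, choose distinct $p,q\in N_H(v)$ and distinct $m_1,m_2\in N_K(v)$. For each choice of $x\in\{p,q\}$ and $m\in\{m_1,m_2\}$ the pivot $v$ lies in $N_H(x)\cap N_K(m)$, so $A_{xm}\ge 1$ and hence $x\sim_G m$; since each $m_i$ is then adjacent to $p$ and $q$ it differs from them, so $p,m_1,q,m_2$ is a genuine $4$-cycle $p\sim_G m_1\sim_G q\sim_G m_2\sim_G p$ in $G$, contradicting $C_4$-freeness. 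This is the step I expect to be the main obstacle, both to isolate and to state cleanly, as it is the only place where cycle-freeness enters.

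With the dichotomy in hand I partition $V(G)=U\sqcup V\sqcup Z$, where $U=\{v:\dd_H(v)\ge 2\}$ (on which $\dd_K\equiv 1$), $V=\{v:\dd_K(v)\ge 2\}$ (on which $\dd_H\equiv 1$), and $Z=\{v:\dd_H(v)=\dd_K(v)=1\}$; these are disjoint precisely because of the dichotomy. By \Cref{deg} the function $\dd_K$ is constant on each connected component of $H$, so a component of $H$ meeting $V$ has all $K$-degrees $\ge 2$, hence all $H$-degrees equal to $1$, hence is a single edge $K_2$ with both endpoints in $V$; thus $V$ is a disjoint union of such pairs and $|V|$ is even. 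Symmetrically, using $A=CB$, the set $U$ is a disjoint union of $K$-edges $K_2$, so $|U|$ is even, and moreover $\dd_H$ is constant on each of these $K_2$-pairs.

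It remains to show $|Z|$ is even, which is where the pairing of $U$ pays off. Applying the handshake identity to $H$ gives $2|E(H)|=\sum_{v}\dd_H(v)=\sum_{v\in U}\dd_H(v)+|V|+|Z|$, since every vertex of $V\cup Z$ has $\dd_H=1$. The sum $\sum_{v\in U}\dd_H(v)$ runs over the $K_2$-pairs of $U$, each contributing $2\dd_H$, and is therefore even; as $|V|$ is even as well, the identity forces $|Z|$ to be even. Finally $n=|U|+|V|+|Z|$ is a sum of three even numbers, so $n$ is even, as claimed.
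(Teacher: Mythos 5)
Your proof is correct. Its first half --- the degree dichotomy forced by $C_4$-freeness --- is exactly the paper's key step (the paper states it as: for every $v$, $\dd_H(v)\le 1$ or $\dd_K(v)\le 1$, which combined with $\dd_G(v)=\dd_H(v)\dd_K(v)\ge 1$ gives your version); you are in fact slightly more careful than the paper in verifying that the four vertices $p,m_1,q,m_2$ are genuinely distinct, using that the zero diagonal of $A$ rules out coincidences. Where you genuinely diverge is the finish. The paper exploits the arithmetic identity that $ab=a+b-1$ whenever $\min(a,b)=1$, so the dichotomy gives pointwise $\dd_G(v)=\dd_H(v)+\dd_K(v)-1$; summing over all vertices yields $2|E(G)|=2|E(H)|+2|E(K)|-n$, whence $n$ is even --- one line, using only \Cref{degree}. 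You instead partition $V(G)$ into three classes and show each has even cardinality, which needs two extra ingredients the paper never touches: \Cref{deg} (constancy of the co-factor's degree on components of a factor) and the commutation $A=CB$ so that \Cref{deg} can be applied with the roles of $H$ and $K$ exchanged. Your route is longer but buys real structural information that the paper's count does not reveal: the vertices with $\dd_K(v)\ge 2$ are matched into pairs by $K_2$-components of $H$, those with $\dd_H(v)\ge 2$ are matched by $K_2$-components of $K$, and $\dd_H$ is constant on the latter pairs. The paper's route buys the cleaner and stronger global identity $2|E(G)|=2|E(H)|+2|E(K)|-n$, of which the parity of $n$ is only a corollary.
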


\begin{proof}
We first assert that for each vertex $v$, either $\dd_H(v) \le 1$ or $\dd_K(v) \le 1$.
Assume to the contrary there exists a vertex $v$ with $\dd_H(v)\ge 2$ and  $\dd_K(v)\ge 2$.
Then $v$ has two neighbors $u_1,u_2$ in $H$ and two neighbors $w_1,w_2$ in $K$.
By definition, $G$ contains a cycle $C_4$ with edges $\{u_1,w_1\}, \{w_1,u_2\}, \{u_2,w_2\}, \{w_2,u_1\}$; a contradiction to the assumption on $G$.
By \Cref{degree}, 
$\dd_G(v)=\dd_H(v) \cdot \dd_K(v)$ for each vertex $v$.
As $G$ contains no isolated vertices,  $\dd_G(v) \ge 1$, and hence either $\dd_H(v) = 1$ or $\dd_K(v) = 1$ for each vertex $v$.
So,  for each vertex $v$, 
$$\dd_G(v)=\dd_H(v) + \dd_K(v) -1.$$
Now summing the above equality over all vertices, we have 
$$2|E(G)|=2|E(H)|+2|E(K)|-n,$$
which implies that $n$ is even.
\end{proof}

\begin{thm}
Any tree of order at least $2$ is not factorizable.
\end{thm}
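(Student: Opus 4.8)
The plan is to prove the stronger statement that every tree $T$ of order $n \geq 2$ is not factorizable, building on \Cref{C4free} and \Cref{connectedness}. The key observation is that a tree contains no cycles whatsoever, so in particular it contains no $C_4$, and since $n \geq 2$ a tree has no isolated vertices. Thus \Cref{C4free} applies and tells us that \emph{if} $T$ were factorizable, then $n$ must be even. So the first reduction is immediate: it suffices to rule out the case where $n$ is even, because the odd case is already handled (a tree on an odd number of vertices has an even number of edges, wait---I must be careful here). Let me re-examine: a tree on $n$ vertices has $n-1$ edges. By \cite[Theorem 4]{Manjunatha}, a factorizable graph has an even number of edges, so $n-1$ must be even, forcing $n$ to be odd. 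But \Cref{C4free} forces $n$ to be even. These two parity requirements are contradictory, which immediately gives the result.

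So the cleanest approach is a direct parity clash. First I would record that $T$ is connected with $n \geq 2$, hence has no isolated vertex, and contains no $C_4$ since it is acyclic. Applying \Cref{C4free} yields that $n$ is even. On the other hand, since $T$ has $n-1$ edges and every factorizable graph has an even number of edges by \cite[Theorem 4]{Manjunatha}, we must have $n-1$ even, i.e.\ $n$ odd. The two conclusions $n$ even and $n$ odd cannot both hold, so $T$ is not factorizable. This argument requires no spectral machinery at all beyond the two cited parity facts.

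I expect the main subtlety---rather than a genuine obstacle---to be making sure the hypotheses of \Cref{C4free} are correctly verified for the tree, namely that it has no isolated vertices (which holds precisely because $n \geq 2$ and $T$ is connected, so every vertex has degree at least one) and that it is genuinely $C_4$-free (immediate from acyclicity). One should also confirm that \Cref{C4free}'s proof does not secretly assume the factors have no isolated vertices in a way that fails here; reading its statement, it only requires that $G$ itself have no isolated vertices and no $C_4$, both of which hold. If one wished to avoid invoking the external edge-count result, an alternative is to argue directly from the degree equation $\dd_G(v) = \dd_H(v) + \dd_K(v) - 1$ established inside the proof of \Cref{C4free}, summing to get $2|E(G)| = 2|E(H)| + 2|E(K)| - n$ and then using $|E(G)| = n-1$ to derive $n = 2|E(H)| + 2|E(K)| - 2(n-1)$, i.e.\ $3n = 2|E(H)| + 2|E(K)| + 2$, which forces $3n$ even and hence $n$ even---again clashing with the tree edge-count parity. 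Either route closes the argument; the direct parity contradiction is the most economical and is the one I would present.
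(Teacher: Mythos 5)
Your proposal is correct and takes essentially the same route as the paper's own proof: verify the hypotheses of \Cref{C4free} (no $C_4$, no isolated vertices) to conclude $n$ is even, then invoke \cite[Theorem 4]{Manjunatha} to force the edge count $n-1$ to be even, a parity contradiction. One small caveat: your closing ``alternative'' does not actually avoid the external edge-count result, since deriving $3n$ even only reproves that $n$ is even and you still need \cite[Theorem 4]{Manjunatha} to produce the clash---but the main argument you present is exactly the paper's.
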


\begin{proof}
Assume that there exists a tree $T$ of order $n \ge 2$ that can be factored into two graphs.
By \Cref{C4free}, $n$ is even, and then $T$ has $n-1$ (an odd number of) edges.
However, by \cite[Theorem 4]{Manjunatha}, if a graph $G$ has a factorization, then it must have an even number of edges, which yields a contradiction.
\end{proof}

\begin{thm}
Any forest with no isolated vertices and with an odd number of connected components is not factorizable.
\end{thm}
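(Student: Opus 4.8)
The plan is to derive a contradiction from two parity constraints that any factorizable such forest would have to satisfy simultaneously. Suppose, for contradiction, that a forest $F$ of order $n$ with no isolated vertices and with an odd number $c$ of connected components is factorizable.

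First I would observe that a forest is acyclic, so in particular it contains no $C_4$; combined with the hypothesis that $F$ has no isolated vertices, this puts $F$ exactly within the scope of \Cref{C4free}. Applying that lemma immediately yields that $n$ is even.

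Next I would count edges via the standard forest identity. Writing $T_1,\dots,T_c$ for the components, each $T_i$ is a tree on $n_i$ vertices and contributes $n_i-1$ edges, so $|E(F)|=\sum_{i=1}^c (n_i-1)=n-c$. Since $n$ is even and $c$ is odd, $n-c$ is odd, and hence $F$ has an odd number of edges. Finally I would invoke \cite[Theorem 4]{Manjunatha}, which states that every factorizable graph has an even number of edges; this contradicts the odd edge count just obtained, completing the proof.

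The argument is short, and there is no serious obstacle beyond assembling the pieces in the correct order: the one point that must be noticed is that a forest is automatically $C_4$-free, so that \Cref{C4free} is applicable and forces $n$ to be even. Once that is in place, the parity of the edge count $n-c$ (odd, because $n$ is even and $c$ is odd) collides with the even-edge requirement of \cite[Theorem 4]{Manjunatha}, and the contradiction is immediate. Note also that the hypothesis of no isolated vertices is used precisely to license the application of \Cref{C4free}, not for the edge count.
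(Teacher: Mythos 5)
Your proposal is correct and follows exactly the paper's own argument: apply \Cref{C4free} (noting a forest is $C_4$-free) to get $n$ even, count $|E(F)|=n-c$ as odd, and contradict the even-edge requirement of \cite[Theorem 4]{Manjunatha}. If anything, your version is slightly more careful than the paper's, since you make the proof-by-contradiction structure explicit before invoking \Cref{C4free}, whose hypothesis requires a factorization.
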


\begin{proof}
Let $F$ be a forest of order $n$ with no isolated vertices and $k$ connected components, where $k$ is odd.
Then $n$ is even by \Cref{C4free}.
Note that $F$ has $n-k$ (an odd number of) edges.
So $F$ is not factorizable by \cite[Theorem 4]{Manjunatha}.
\end{proof}
\begin{problem}
Characterize all factorizable forests.
\end{problem}

\bibliography{references_LAA.bib}
\end{document}